\UseRawInputEncoding
\documentclass[10pt]{article}

\usepackage{amsfonts,amsmath,amsthm}
\usepackage{amssymb}
\usepackage{algorithm}
\usepackage{algpseudocode}
\usepackage{xcolor}
\usepackage{graphicx}
\usepackage{stmaryrd}        % provides \llbracket and \rrbracket
\usepackage{multirow}
\usepackage{dsfont}
\usepackage{rotating}
\usepackage{booktabs}

\usepackage[paper=a4paper,dvips,top=2cm,left=2cm,right=2cm,
foot=1cm,bottom=4cm]{geometry}

\newcommand{\vx}{{\bf x}}
\newcommand{\vy}{{\bf y}}

\newcommand{\R}{\mathbb{R}}

\newcommand{\sos}{\mathrm{sos}}

\newtheorem{Thm}{Theorem}[section]
\newtheorem{Def}[Thm]{Definition}

\newtheorem{example}[Thm]{Example}
\newtheorem{remark}[Thm]{Remark}

\begin{document}
	
\title{Biquadratic SOS Rank and Augmented Zarankiewicz Number}
	\Large
	\author{Liqun Qi\footnote{Jiangsu Provincial Scientific Research Center of Applied Mathematics, Nanjing 211189, China.
			Department of Applied Mathematics, The Hong Kong Polytechnic University, Hung Hom, Kowloon, Hong Kong.
			({\tt maqilq@polyu.edu.hk})}
		\and
		Chunfeng Cui\footnote{School of Mathematical Sciences, Beihang University, Beijing  100191, China.
			({\tt chunfengcui@buaa.edu.cn})}
		\and {and \
			Yi Xu\footnote{School of Mathematics, Southeast University, Nanjing  211189, China. Nanjing Center for Applied Mathematics, Nanjing 211135,  China. Jiangsu Provincial Scientific Research Center of Applied Mathematics, Nanjing 211189, China. ({\tt yi.xu1983@hotmail.com})}
		}
	}

	\date{\today}
	\maketitle

\begin{abstract}
This paper introduces the concepts of the augmented Zarankiewicz number \(z_A(m,n)\) and the limited augmented Zarankiewicz number \(z_L(m,n)\), which are natural combinatorial extensions of the classical Zarankiewicz number. These numbers arise from augmented bipartite graphs that may contain both standard edges (1-edges) and pairs of edges representing squares of binomials (2-edges). The main theoretical result establishes the inequality chain \(\operatorname{BSR}(m, n) \geq z_A(m, n) \geq z_L(m, n) \geq z(m, n)\), linking the maximum biquadratic sum-of-squares (SOS) rank to these extremal graph parameters. We determine the exact values of \(z_L(m, n)\) for the cases \((m,2)\), \((3,3)\), $(4, 3)$ and \((4,4)\), and provide new lower bounds for the cases \((5,3)\), \((5,4)\), and \((5,5)\). These results yield improved lower bounds for the maximum SOS rank of biquadratic forms, demonstrating that \(z_L(m,n)\) can exceed the classical Zarankiewicz number, thereby offering a refined combinatorial perspective on the SOS rank problem.

\medskip

\noindent\textbf{Keywords.} Biquadratic form, sum of squares, SOS rank, Zarankiewicz number, augmented Zarankiewicz number, bipartite graph, limited augmented Zarankiewicz number, \(C_4\)-cycle.

\medskip
\noindent\textbf{AMS subject classifications.} 14P10, 05C35, 11E25, 15A69, 90C22.
\end{abstract}

\section{Introduction}

Denote $\{1, \ldots, m\}$ as $[m]$.   Assume that $m \ge  n \ge 2$.   Let
\begin{equation} \label{e1}
P(\mathbf{x}, \mathbf{y}) = \sum_{i,k=1}^{m} \sum_{j,l=1}^{n} a_{ijkl} x_i x_k y_j y_l.
\end{equation}
We call $P$ an $m \times n$ {\bf biquadratic form}.    Here $a_{ijkl}$ are real numbers.  We assume that
\begin{equation} \label{e2}
a_{ijkl} = a_{kjil} = a_{klij}
\end{equation}
for $i, k \in [m], j, l \in [n]$.
A PSD (positive semi-definite) biquadratic form is one for which $P(\mathbf{x}, \mathbf{y}) \geq 0$ for all $\mathbf{x}, \mathbf{y}$. It is SOS (sum of squares) if it can be written as a finite sum of squares of bilinear forms, i.e.,
\begin{equation} \label{e3}
P(\vx, \vy) = \sum_{p=1}^r f_p(\vx, \vy)^2,
\end{equation}
where $f_p$ are $m \times n$ bilinear forms for $p \in [r]$.  The smallest $r$ such that (\ref{e3}) holds is called the SOS rank of $P$, denoted as $\mathrm{sos}(P)$.  In this case, we say that the sum of squares expression $\sum_{p=1}^r f_p(\vx, \vy)^2$ is {\bf irreducible}.  The maximum SOS rank of $m \times n$ SOS biquadratic forms is denoted as $BSR(m, n)$.   It is known that $BSR(m, 2) = m+1$ and $BSR(2, n) = n+1$ \cite{BPSV19}, \(BSR(3,3) = 6\) \cite{BSSV22} and $BSR(m, n) \le mn-1$ \cite{QCX26}.

Recently, it was proved \cite{CQX26} that
\begin{equation} \label{e4}
BSR(m ,n) \ge z(m, n),
\end{equation}
where $z(m, n)$ is the Zarankiewicz number \cite{Za51}.  The Zarankiewicz problem, originating from \cite{Za51}, asks for the maximum number of edges in an $m \times n$ bipartite graph that contains no complete bipartite subgraph $K_{2,2}$ (i.e., no $C_4$).  This classical extremal problem has been extensively studied; see \cite{Gu69, Bo04, RS65} for surveys and fundamental bounds.

The inequality (\ref{e4}) is based upon the simple biquadratic forms introduced in \cite{QCX26}.  A biquadratic form is a simple biquadratic form if it contains only distinct terms of the type \(x_i^2 y_j^2\).   Then we may establish a one-to-one relation.  Let $G = (S, T, E)$ be a bipartite graph, where $S = [m]$ and $T = [n]$ are the vertex sets and $E$ is the edge set. Then uniquely we have a simple quadratic form
\begin{equation} \label{e5}
P_G(\vx, \vy) = \sum_{(i, j) \in E} x_i^2y_j^2.
\end{equation}
If $G$ has no $C_4$-cycles, then the simple biquadratic form $P_G$ is irreducible, i.e., $\mathrm{sos}(P_G) = |E|$.  On the other hand, the Zarankiewicz number $z(m, n)$ is defined as the largest number of edges such that the $m \times n$ bipartite graph $G$ is $C_4$-cycle free \cite{Za51, RS65}.   Thus, we have (\ref{e4}).

It is known that $BSR(m, 2) = z(m , 2) = m+1$, $BSR(2, n) = z(2, n) = n+1$ and $BSR(3, 3) = z(3, 3) = 6$.  However, it was found in \cite{XCQ26} that $BSR(4, 3) \ge 8 > z(4, 3) = 7$.  The value $z(4,3)=7$ is a classical result in Zarankiewicz theory \cite{RS65, Gu69}.  This gap was discovered in the following example.
% way.   First, we have
%\[
%P_{4,3,7}(\vx, \vy) = x_1^2 y_1^2 + x_2^2 y_2^2 + x_3^2 y_3^2 + x_1^2 y_2^2 + x_2^2 y_3^2 + x_3^2 y_1^2 + x_4^2 y_1^2.
%\]
%This is a $4 \times 3$ simple biquadratic form whose corresponding $4\times 3$ bipartite graph $G$ attains the Zarankiewicz number $z(4, 3)=7.$   Then, an $8$-square $4 \times 3$ sos biquadratic form $Q$ is formed as
%\begin{equation}\label{equ:Q437}
%	Q(\vx, \vy) = P_{4, 3, 7}(\vx, \vy) + (x_4y_2 + x_1y_3)^2.
%\end{equation}
%It was proved that $Q$ is irreducible, i.e., $\mathrm{sos}(Q) = 8$.  In this way, it was established that
%\begin{equation} \label{e6}
%BSR(4, 3) \ge 8 > z(4, 3) = 7.
%\end{equation}
\begin{example}\cite{XCQ26}\label{example:BSR43}
	Define an $8$-square $4 \times 3$ sos biquadratic form as
	\begin{equation}\label{equ:Q437}
		Q(\vx, \vy) = P_{4, 3, 7}(\vx, \vy) + (x_4y_2 + x_1y_3)^2,
	\end{equation}
	where \[
	P_{4,3,7}(\vx, \vy) = x_1^2 y_1^2 + x_2^2 y_2^2 + x_3^2 y_3^2 + x_1^2 y_2^2 + x_2^2 y_3^2 + x_3^2 y_1^2 + x_4^2 y_1^2.
	\]
\end{example}
	
It was proved that $Q$ is irreducible, i.e., $\mathrm{sos}(Q) = 8$.  In this way, it was established that
		\begin{equation} \label{e6}
			BSR(4, 3) \ge 8 > z(4, 3) = 7.
		\end{equation}

This makes us think further: Why can such a thing happen?  What is its meaning in the theory of the Zarankiewicz number?  Now,
$(x_4y_2 + x_1y_3)^2$ is not a single term square.  It does not correspond to an edge of the bipartite graph $G$.   But can we interpret it as a super edge of $G$?  This motivates this paper.

The remainder of this paper is organized as follows. Section~2 formally defines the augmented bipartite graph, the augmented Zarankiewicz number \(z_A(m,n)\), and the limited augmented Zarankiewicz number \(z_L(m,n)\), establishing the fundamental relationship \(\operatorname{BSR}(m, n) \geq z_A(m, n) \geq z_L(m, n) \geq z(m, n)\). Sections~3 and 4 determines the exact values of \(z_L(m, n)\) for the cases \((m,2)\), \((3,3)\), $(4, 3)$ and \((4,4)\). Sections~5, 6, and~7 investigate and establish new lower bounds for \(z_L(m,n)\) for the cases \((5,3)\), \((5,4)\), and \((5,5)\), respectively. Finally, Section~8 concludes the paper by summarizing the results and presenting several open problems for future research.

\section{Augmented Zarankiewicz Number and Limited Augmented Zarankiewicz Number}

Let $G_1 = (S, T, E_1)$ be an $m \times n$ bipartite graph, where $S = [m]$ and $T = [n]$ are its vertex sets.  Assume that $G_1$ has no $C_4$-cycles.  Then we say that $G_1$ can be augmented to an {\bf augmented bipartite graph} $G = (S, T, E)$, where the edge set $E = E_1 \cup E_2$.  Here, we call any edge $(i, j)$ of $G_1$ a {\bf 1-edge} of $G$, while $E_2$ is the {\bf 2-edge} set of $G$.

A $1$-edge $e$ in $E_1$ has the form $e = (i, j)$, where $i \in S$ and $j \in T$. On the other hand, a $2$-edge $e$ in $E_2$ has the form $(i, j; k, l)$, where $i, k \in S$ and $j, l \in T$.

\begin{figure}
		\begin{center}
			\includegraphics[width=0.8\linewidth]{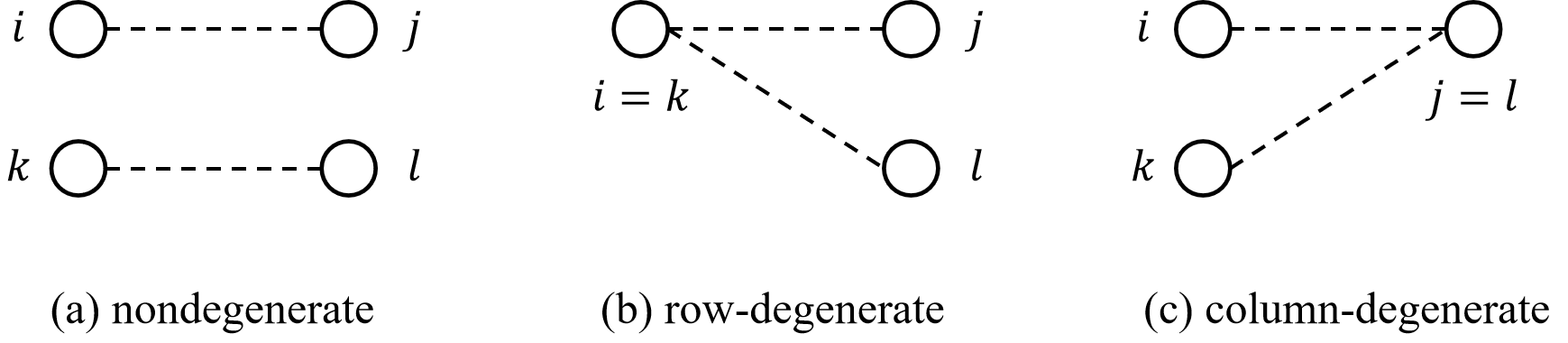}
		\end{center}	
		\caption{Illustrations of nondegenerate, row-degenerate, and column-degenerate $2$-edges.}\label{fig:2edge}
\end{figure}

A $2$-edge can be \emph{nondegenerate} if $i \neq k$ and $j \neq l$, \emph{row-degenerate} if {$i=k$ and $j\neq l$,} %$i \neq k$ and $j = l$,
 or \emph{column-degenerate} if {$i \neq k$ and $j = l$.}  %$i = k$ and $j \neq l$.
  It cannot be $i = k$ and $j = l$.
{Illustrations of nondegenerate, row-degenerate, and column-degenerate $2$-edges are shown in Fig.~\ref{fig:2edge}.}
Furthermore, we impose the following \emph{simplicity condition} on such an augmented bipartite graph $G$:
\begin{enumerate}
\item[(S)] No 2-edge overlaps with any 1-edge or other 2-edge on a cell.
\end{enumerate}
Here, we call $(i, j)$ a \textbf{cell} for any $i \in S$ and $j \in T$.
If $|E_1| = z(m, n)$, we say that $G$ is a {\bf limited augmented bipartite graph}.

For such an augmented bipartite graph $G$, we associate it with an $m \times n$ sos biquadratic form $P_G$, defined as:
\begin{equation} \label{e7}
P_G(\vx, \vy) = \sum_{(i, j)\in E_1} x_i^2y_j^2 + \sum_{(i, j; k, l) \in E_2} (x_iy_j+x_ky_l)^2.
\end{equation}
We call $P_G$ a \textbf{doubly simple biquadratic form}. If $P_G$ is irreducible, then the SOS rank of $P_G$ is $|E| = |E_1| + |E_2|$.
 For instance, Example~\ref{example:BSR43} is a $4\times 3$ irreducible doubly simple biquadratic form  with
 $$E_1 = \{ (1, 1), (2, 2), (3, 3), (1, 2), (2, 3), (3, 1), (4, 1)\} \text{ and } E_2 = \{ (4,2;1,3) \}.$$ % $E_2 = \{ (i,j;k,l) \}$.
  %Then we see that if $(i, j;k,l)$ is $(4,2;1,3)$,
 % $P_G$ is irreducible.
 We wish to characterize the combinatorial feature of irreducible doubly simple biquadratic forms.
  %such
  %$(i, j; k, l)$.

\medskip

We now refine the notion of a $C_4$-cycle to account for the presence of 2-edges.

\begin{Def}[Generalized $C_4$-Cycle, Augmented Zarankiewicz Number and Limited Augmented Zarankiewicz Number] \label{d2.1}
Let $G = (S,T,E_1 \cup E_2)$ be an $m \times n$ augmented bipartite graph with vertex sets $S=[m]$, $T=[n]$, augmented from a $C_4$-cycle free bipartite graph $G_1 = (S,T,E_1)$.
A cell $(i,j)$ is called \textbf{occupied} if $(i,j) \in E_1$ or $(i,j)$ is a half of some $2$-edge in $E_2$.
We say that $G$ contains a \textbf{generalized $C_4$-cycle} if either
\begin{enumerate}
\item there exists a classical $C_4$-cycle formed by $1$-edges, or
\item there exists a nondegenerate $2$-edge $(i,j;k,l)\in E_2$ such that both opposite cells $(i,l)$ and $(k,j)$ are occupied, or
\item there exist a $2$-edge $(i, j; p, q)$ (of any type) and a distinct cell $(k, l)$, with $k \not = i$, $k \not = p$, $l \not = j$, $l \not = q$, such that the five cells $(k, l), (k, j), (k, q), (i, l), (p, l)$ are all occupied. Furthermore, if the 2-edge is nondegenerate, these five cells must be pairwise distinct.
\end{enumerate}

The \textbf{augmented Zarankiewicz number} $z_A(m,n)$ is the maximum possible total number of edges $|E_1|+|E_2|$ for which a generalized $C_4$-cycle does not exist for such an augmented bipartite graph $G$.

The \textbf{limited augmented Zarankiewicz number} $z_L(m,n)$ is the maximum possible total number of edges $|E_1|+|E_2|$ for which a generalized $C_4$-cycle does not exist for a limited augmented bipartite graph $G$.
\end{Def}

	\begin{figure}
		\begin{center}
			\includegraphics[width=\linewidth]{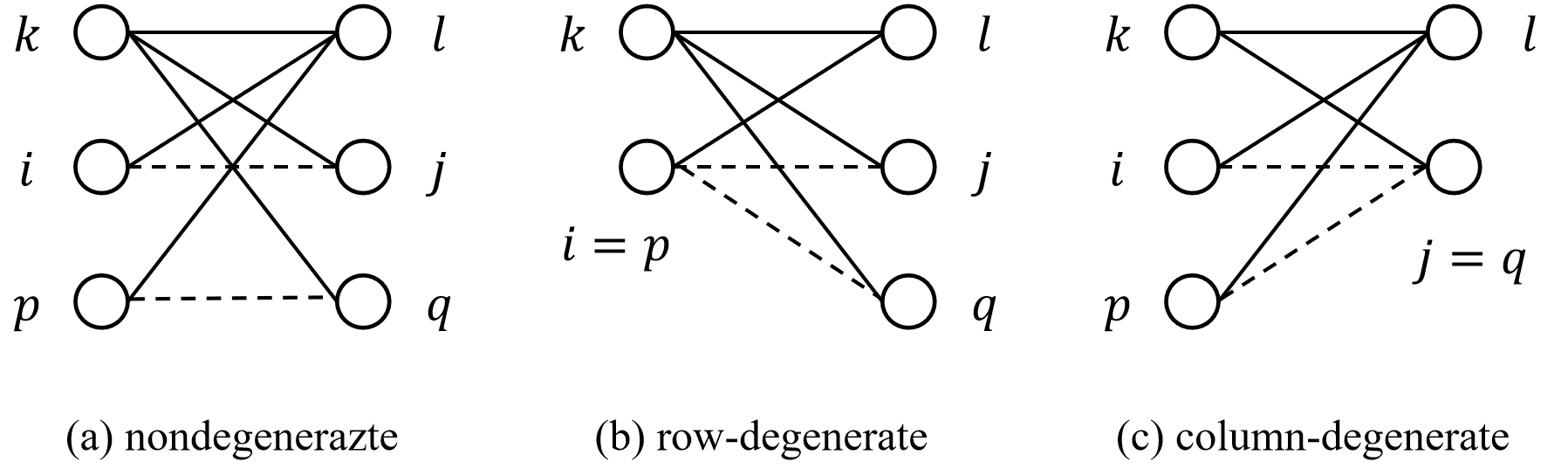}
		\end{center}	
		\caption{An example that Condition 3 in Definition~\ref{d2.1} is triggered in the nondegenerate, row-degenerate, and column-degenerate cases. Here, each solid line represents 1-edge, and the two dotted lines  represent  a 2-edge.}\label{fig:condition3}
\end{figure}

\begin{remark}\label{rem:2.2}
Conditions 1 and 2 are natural and combinatorial. Condition 1 ensures that no classical $C_4$ appears. Condition 2 is necessary: if a nondegenerate $2$-edge $(i,j;k,l)$ had both opposite cells $(i,l)$ and $(k,j)$ occupied, then the associated vectors {may} satisfy a linear dependence, lowering the SOS rank.
For a limited augmented bipartite graph, because $|E_1| = z(m,n)$, each half of a $2$-edge would form a $C_4$ with some distinct occupied cells (otherwise more $1$-edges could be added). Condition 3 then guarantees that the two lists of such cells those completing a $C_4$ with the first half and those completing a $C_4$ with the second half have no intersection.
Fig.~\ref{fig:condition3} presents an example that Condition 3 in Definition~\ref{d2.1} is triggered, since both the first half $(i,j)$ and the second half $(p,q)$   complete  a $C_4$ with $(k,l)$.
Condition 3 handles more complex obstructions that arise when a $2$-edge interacts with a distant cell; it applies uniformly to all $2$-edge types (nondegenerate, row-degenerate, or column-degenerate).
\end{remark}

\begin{Thm}\label{thm:Def2.1_irreducible}
Let $m,n\ge 2$, $G = (S,T,E_1 \cup E_2)$ be an $m \times n$ augmented bipartite graph with vertex sets $S=[m]$, $T=[n]$. Then, if $G$ does not contain any generalized $C_4$-cycle in the sense of Definition~\ref{d2.1}, then $P_G$ defined by \eqref{e7} is irreducible. Namely, $\sos(P_G)=|E_1|+|E_2|$.
\end{Thm}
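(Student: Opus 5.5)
The plan is to use the Gram-matrix description of sums of squares. Let $v$ be the vector of the $mn$ bilinear monomials $x_iy_j$, indexed by the cells $(i,j)$. A decomposition $P_G=\sum_{p=1}^r f_p^2$ is the same thing as a PSD matrix $M$ with $P_G=v^\top M v$ and $\operatorname{rank}M\le r$. Hence $\sos(P_G)$ is the minimum rank of a PSD Gram matrix of $P_G$, and it suffices to show that every PSD Gram matrix $M$ of $P_G$ has rank at least $|E_1|+|E_2|$. The upper bound $\sos(P_G)\le|E|$ is immediate from \eqref{e7}.

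First I will record which entries of $M$ are forced by the coefficients of $P_G$.
\begin{itemize}
\item The monomial $x_i^2y_j^2$ arises only from the diagonal entry $M_{(ij),(ij)}$. So the diagonal of $M$ is $1$ on occupied cells, by the simplicity condition (S), and $0$ elsewhere. By positive semidefiniteness, the rows and columns of unoccupied cells vanish.
\item Same-row pairs ($x_i^2y_jy_l$) and same-column pairs ($x_ix_k y_j^2$) each come from a single off-diagonal entry, so these entries are forced.
\item For $i\ne k$ and $j\ne l$, the coefficient of $x_ix_ky_jy_l$ equals $2\bigl(M_{(ij),(kl)}+M_{(il),(kj)}\bigr)$. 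The entry $M_{(ij),(kl)}$ is therefore forced unless all four corners $(i,j),(k,l),(i,l),(k,j)$ are occupied. In that case only the sum of the two "diagonal" entries is forced, leaving one free parameter $t$ with the pair $(t,c-t)$.
\end{itemize}

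Next I will handle each $2$-edge $(i,j;k,l)$. Its halves $a=(i,j)$ and $b=(k,l)$ satisfy $M_{aa}=M_{bb}=1$. In the degenerate cases $M_{ab}=1$ is forced by the previous paragraph. In the nondegenerate case, Condition 2 of Definition~\ref{d2.1} says the opposite corners are not both occupied, so again $M_{ab}=1$ is forced. Writing $M$ as the Gram matrix of vectors $u_c$ indexed by occupied cells, we get $\|u_a-u_b\|^2=0$, i.e.\ $u_a=u_b$. Collapsing each $2$-edge to a single vector gives a family of exactly $|E_1|+|E_2|$ vectors, one per edge, spanning the column space of $M$. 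It remains to show that their Gram matrix $N$ is nonsingular. I will aim to show that $N$ is in fact the identity, meaning all free parameters must be $0$ and all cross terms vanish.

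Every off-diagonal entry of $N$ between distinct edges is either forced to $0$, since $P_G$ has no other cross monomials, or lies on a fully occupied rectangle. By Condition 1 such a rectangle cannot consist only of $1$-edges, so it contains a half of some $2$-edge $(i,j;p,q)$ together with a far cell $(k,l)$.

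This is the main obstacle. A free parameter $t$ attached to the half $(i,j)$ is transported, through the identification $u_{(i,j)}=u_{(p,q)}$, to an inner product with the other half $(p,q)$. This must be consistent with the constraint coming from rectangles through $(p,q)$. A nonzero $t$ could only survive if the same far cell $(k,l)$ closes rectangles with both halves, which needs exactly the five occupied cells $(k,l),(k,j),(k,q),(i,l),(p,l)$; Condition 3 excludes this configuration. In the absence of such a shared cell, I expect each free parameter to appear in $N$ paired only with a forced zero or with $-t$ on an entry involving a vector orthogonal to everything else. I will then use PSD $2\times2$ or $3\times3$ principal minors to force $t=0$. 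This gives $N=I$, hence $\operatorname{rank}M=|E_1|+|E_2|$.

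I expect the careful case analysis here, over the three $2$-edge types and over which corners of the rectangle are halves of which $2$-edges, to be the hardest and most delicate part of the proof.
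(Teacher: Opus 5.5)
Your plan is essentially the paper's argument in Gram-matrix language, and the step you flag as delicate closes directly. If $t=\langle u_{(i,j)},u_{(k,l)}\rangle\neq 0$ on a fully occupied rectangle with $(i,j)$ a half of $(i,j;p,q)$, then also $\langle u_{(p,q)},u_{(k,l)}\rangle=t$; this entry is a forced zero when $p=k$, when $q=l$, or when $(p,l),(k,q)$ are not both occupied, and otherwise the five cells $(k,l),(k,j),(k,q),(i,l),(p,l)$ are all occupied, contradicting Condition 3. The cases where the half is $(k,l)$, or lies on the other diagonal (which carries $-t$), are symmetric, so no chains of free parameters arise and no PSD minors are needed.

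Your collapse to one vector per edge is in fact the sounder bookkeeping. The paper instead takes a minimal-support dependence among all $|E_1|+2|E_2|$ half-vectors, and as written this does not exclude the trivial relation $\mathbf{v}_a-\mathbf{v}_b=\mathbf{0}$ between the two halves of a 2-edge. In your version, $|E_1|+|E_2|>r$ orthonormal vectors in $\mathbb{R}^r$ give an immediate contradiction.
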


\begin{proof}
Assume, for contradiction, that $\sos(P_G)<|E_1|+|E_2|$. Then there exists an integer $r<|E_1|+|E_2|$ and bilinear forms
\[
\ell_p(\mathbf{x},\mathbf{y})=\sum_{i=1}^m\sum_{j=1}^n a_{ij}^{(p)}x_iy_j\qquad(p=1,\dots,r)
\]
such that
\[
P_G(\mathbf{x},\mathbf{y})=\sum_{p=1}^r\ell_p(\mathbf{x},\mathbf{y})^2.
\]

For each pair $(i,j)$ define the vector $\mathbf{v}_{ij}=(a_{ij}^{(1)},\dots,a_{ij}^{(r)})\in\R^r$.
Let $H$ be the set of all cells that are either $1$-edges or halves of $2$-edges:
\[
H = E_1 \cup \bigcup_{(i,j;k,l)\in E_2} \{(i,j), (k,l)\}.
\]
If $(i,j)\notin H$, then comparing coefficients shows $\mathbf{v}_{ij}=\mathbf{0}$.
By the simplicity condition (S), the $2|E_2|$ halves are all distinct and none belongs to $E_1$; hence
\[
|H| = |E_1| + 2|E_2| \ge |E_1| + |E_2| > r.
\]

Thus the vectors $\{\mathbf{v}_{ij} : (i,j)\in H\}$ lie in $\mathbb{R}^r$ and $|H|>r$, so they are linearly dependent. Choose a nontrivial linear relation
\[
\sum_{(i,j)\in H} \alpha_{ij}\mathbf{v}_{ij} = \mathbf{0} \tag{1}
\]
with \emph{minimal support} $\mathcal{S} = \{(i,j)\in H : \alpha_{ij} \neq 0\}$. Minimality means that no proper nonempty subset of $\mathcal{S}$ yields a linear dependence among the vectors $\{\mathbf{v}_{ij} : (i,j)\in\mathcal{S}\}$.

By minimality, all vectors $\{\mathbf{v}_{ij} : (i,j)\in\mathcal{S}\}$ are nonzero and \emph{pairwise distinct as vectors}. Indeed, if two distinct cells $(i,j),(k,l)\in\mathcal{S}$ had $\mathbf{v}_{ij}=\mathbf{v}_{kl}$, then we could replace $\alpha_{ij}\mathbf{v}_{ij}+\alpha_{kl}\mathbf{v}_{kl}=(\alpha_{ij}+\alpha_{kl})\mathbf{v}_{ij}$, obtaining a relation with smaller support (unless $\alpha_{ij}+\alpha_{kl}=0$, in which case both terms cancel). The minimality ensures this does not happen.

Expanding the squares and comparing coefficients yields:
\begin{itemize}
    \item For every $1$-edge $(i,j)\in E_1$: $\|\mathbf{v}_{ij}\|^2=1$.\hfill(A1)
    \item For every $2$-edge $(i,j;k,l)\in E_2$:
    \begin{itemize}
        \item If nondegenerate ($i\neq k$, $j\neq l$):
        $\|\mathbf{v}_{ij}\|^2=\|\mathbf{v}_{kl}\|^2=1$ and $\mathbf{v}_{ij}\!\cdot\!\mathbf{v}_{kl}+\mathbf{v}_{il}\!\cdot\!\mathbf{v}_{kj}=1$.\hfill(A2nd)
        \item If {column}-degenerate ($i\neq k$, $j=l$):
        $\|\mathbf{v}_{ij}\|^2=\|\mathbf{v}_{kj}\|^2=1$ and $\mathbf{v}_{ij}\!\cdot\!\mathbf{v}_{kj}=1$.\hfill(A2rd)
        \item If {row}-degenerate ($i=k$, $j\neq l$):
        $\|\mathbf{v}_{ij}\|^2=\|\mathbf{v}_{il}\|^2=1$ and $\mathbf{v}_{ij}\!\cdot\!\mathbf{v}_{il}=1$.\hfill(A2cd)
    \end{itemize}
    \item For any two distinct pairs $(p,q),(r,s)$ that are \emph{not} the two halves of the same $2$-edge:
    \begin{itemize}
        \item If $\{p,q\}\cap\{r,s\}=\emptyset$:
        $\mathbf{v}_{pq}\!\cdot\!\mathbf{v}_{rs}+\mathbf{v}_{ps}\!\cdot\!\mathbf{v}_{rq}=0$.\hfill(B1)
        \item If $p=r$, $q\neq s$:
        $\mathbf{v}_{pq}\!\cdot\!\mathbf{v}_{ps}=0$, \emph{unless} $(p,q)$ and $(p,s)$ are the two halves of a {row}-degenerate $2$-edge (in which case (A2cd) applies instead).\hfill(B2)
        \item If $q=s$, $p\neq r$:
        $\mathbf{v}_{pq}\!\cdot\!\mathbf{v}_{rq}=0$, \emph{unless} $(p,q)$ and $(r,q)$ are the two halves of a {column}-degenerate $2$-edge (in which case (A2rd) applies instead).\hfill(B3)
    \end{itemize}
    \item For pairs sharing a row or column that are not covered by the exceptions above:
    $\mathbf{v}_{ij}\!\cdot\!\mathbf{v}_{il}=0$ (same row, $j\neq l$), $\mathbf{v}_{ij}\!\cdot\!\mathbf{v}_{kj}=0$ (same column, $i\neq k$).\hfill(C)
\end{itemize}

\noindent\textbf{Step 1. Consequences of the absence of a generalized $C_4$-cycle.}
Take any $2$-edge $(i,j;k,l)\in E_2$.

\emph{If it is nondegenerate:} Since $G$ has no generalized $C_4$-cycle, condition 2 of Definition~\ref{d2.1} implies that at most one of the two opposite cells $(i,l)$ and $(k,j)$ is occupied. Hence at least one of $\mathbf{v}_{il},\mathbf{v}_{kj}$ is zero. From (A2nd) we then obtain $\mathbf{v}_{ij}\!\cdot\!\mathbf{v}_{kl}=1$. Because $\|\mathbf{v}_{ij}\|=\|\mathbf{v}_{kl}\|=1$, Cauchy-Schwarz forces $\mathbf{v}_{ij}=\mathbf{v}_{kl}$.

\emph{If it is {column}-degenerate ($j=l$):} Then (A2rd) gives $\mathbf{v}_{ij}\!\cdot\!\mathbf{v}_{kj}=1$, and with unit norms we get $\mathbf{v}_{ij}=\mathbf{v}_{kj}$.

\emph{If it is {row}-degenerate ($i=k$):} Then (A2cd) gives $\mathbf{v}_{ij}\!\cdot\!\mathbf{v}_{il}=1$, and with unit norms we get $\mathbf{v}_{ij}=\mathbf{v}_{il}$.

Thus every $2$-edge contributes two equal vectors. Moreover, by the minimality observation, \textbf{the two halves of a $2$-edge cannot both belong to $\mathcal{S}$} - if they did, they would be equal vectors and distinct cells, contradicting the pairwise distinctness of vectors in $\mathcal{S}$.

\noindent\textbf{Step 2. Orthogonality of distinct vectors in $\mathcal{S}$.}
Let $(i,j)$ and $(k,l)$ be two distinct positions in the support $\mathcal{S}$. We prove $\mathbf{v}_{ij}\!\cdot\!\mathbf{v}_{kl}=0$.

Note that they cannot be the two halves of the same $2$-edge by Step 1. We consider several cases.

\emph{Case A: The four indices $\{i,j,k,l\}$ are all distinct.} Then (B1) applies:
\[
\mathbf{v}_{ij}\!\cdot\!\mathbf{v}_{kl}+\mathbf{v}_{il}\!\cdot\!\mathbf{v}_{kj}=0. \tag{B1}
\]

Suppose, for contradiction, that $\mathbf{v}_{ij}\!\cdot\!\mathbf{v}_{kl}\neq0$. Then $\mathbf{v}_{il}\!\cdot\!\mathbf{v}_{kj}\neq0$, so both $(i,l)$ and $(k,j)$ are occupied. Consequently the four cells $(i,j),(k,l),(i,l),(k,j)$ are all in $H$.

If all four were $1$-edges, they would form a classical $C_4$, violating condition 1 of Definition~\ref{d2.1}. Therefore at least one of them is a half of a $2$-edge. Without loss of generality, assume $(i,j)$ is a half of a $2$-edge $e=(i,j;p,q)$. Then by Step 1, $\mathbf{v}_{ij}=\mathbf{v}_{pq}$ and $(p,q)$ is occupied. Moreover, since $(i,j)\in\mathcal{S}$, the other half $(p,q)$ cannot be in $\mathcal{S}$ (otherwise two equal vectors would appear in the support), but $(p,q)\in H$.

Now consider the pair $(p,q)$ and $(k,l)$. Their dot product is $\mathbf{v}_{pq}\!\cdot\!\mathbf{v}_{kl}=\mathbf{v}_{ij}\!\cdot\!\mathbf{v}_{kl}\neq0$. Apply (B1) to $(p,q)$ and $(k,l)$:
\[
\mathbf{v}_{pq}\!\cdot\!\mathbf{v}_{kl}+\mathbf{v}_{pl}\!\cdot\!\mathbf{v}_{kq}=0,
\]
so $\mathbf{v}_{pl}\!\cdot\!\mathbf{v}_{kq}\neq0$; thus $(p,l)$ and $(k,q)$ are occupied.

We examine the possibilities for $p$ and $q$ relative to $k$ and $l$.

\begin{itemize}
  \item If $p=k$, then $(p,l)=(k,l)$ and $(p,q)=(k,q)$. But $(k,q)$ and $(k,l)$ share the same row, so by (B2) (they are not halves of a column-degenerate $2$-edge because $q\neq l$ and $p=k$ would force degeneracy only if $q=l$, which is not the case), we have $\mathbf{v}_{kq}\!\cdot\!\mathbf{v}_{kl}=0$. Yet $\mathbf{v}_{kq}=\mathbf{v}_{pq}=\mathbf{v}_{ij}$ and $\mathbf{v}_{ij}\!\cdot\!\mathbf{v}_{kl}\neq0$, a contradiction.
  \item If $q=l$, then $(p,l)$ and $(k,l)$ share the same column, so (B3) gives $\mathbf{v}_{pl}\!\cdot\!\mathbf{v}_{kl}=0$ while $\mathbf{v}_{pl}=\mathbf{v}_{ij}$ again contradicts $\mathbf{v}_{ij}\!\cdot\!\mathbf{v}_{kl}\neq0$.
\end{itemize}

Hence we must have $p\neq k$ and $q\neq l$. In this situation the five cells
\[
(k,l),\;(k,j),\;(k,q),\;(i,l),\;(p,l)
\]
are all distinct (because $i\neq k$, $j\neq l$, $p\neq k$, $q\neq l$). They are all occupied, and $(i,j;p,q)$ is a $2$-edge. Thus the configuration satisfies condition 3 of Definition~\ref{d2.1}, which is forbidden. This contradiction shows that our assumption $\mathbf{v}_{ij}\!\cdot\!\mathbf{v}_{kl}\neq0$ is impossible. Therefore $\mathbf{v}_{ij}\!\cdot\!\mathbf{v}_{kl}=0$.

\emph{Case B: They share a row ($i=k$, $j\neq l$).} Then $(i,j)$ and $(i,l)$ are not the two halves of a column-degenerate $2$-edge (otherwise they would be equal by Step 1 and would not be distinct in $\mathcal{S}$). Hence (B2) applies and gives $\mathbf{v}_{ij}\!\cdot\!\mathbf{v}_{il}=0$.

\emph{Case C: They share a column ($j=l$, $i\neq k$).} Similarly, they are not halves of a row-degenerate $2$-edge, so (B3) gives $\mathbf{v}_{ij}\!\cdot\!\mathbf{v}_{kj}=0$.

Thus in all cases, for any two distinct positions $(i,j),(k,l)\in\mathcal{S}$, we have $\mathbf{v}_{ij}\!\cdot\!\mathbf{v}_{kl}=0$.

\noindent\textbf{Step 3. Contradiction.}
The vectors in $\mathcal{S}$ are nonzero (by (A1) and Step 1) and pairwise orthogonal, so they are linearly independent. But they satisfy the nontrivial linear relation (1), which is impossible. This contradiction shows that our assumption $\sos(P_G)<|E_1|+|E_2|$ was false. Hence $\sos(P_G)=|E_1|+|E_2|$.
\end{proof}

Note that the absence of a generalized $C_4$-cycle is a sufficient condition for the doubly simple biquadratic form to be irreducible, but it is not necessary. For instance, let $m=n=2$, $E_1={(1,1),(2,2)}$ and $E_2={(1,2),(2,1)}$. Then the augmented bipartite graph satisfies Condition 2 and therefore contains a generalized $C_4$-cycle. Nevertheless, the corresponding doubly simple biquadratic form $P_G = (x_1y_1 + x_2y_2)^2 + x_1^2 y_2^2 + x_2^2 y_1^2$ is irreducible.

\begin{Thm}\label{thm:BSR>=z2}
	For all $m,n\ge 2$, we have $$\operatorname{BSR}(m,n) \ge z_A(m,n) \ge z_L(m,n) \ge z(m,n).$$
\end{Thm}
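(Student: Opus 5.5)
The chain splits into three inequalities, and I would prove them from right to left, since each step is either a direct inclusion of feasible sets or an application of Theorem~\ref{thm:Def2.1_irreducible}.

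\textbf{Step 1: $z_L(m,n)\ge z(m,n)$.} Take an extremal $C_4$-free bipartite graph $G_1=(S,T,E_1)$ with $|E_1|=z(m,n)$ and set $E_2=\emptyset$. The simplicity condition (S) then holds vacuously, and the graph is limited because $|E_1|=z(m,n)$. Conditions 2 and 3 of Definition~\ref{d2.1} both require a $2$-edge, so they cannot occur. Condition 1 fails because $G_1$ has no $C_4$. Hence $G$ contains no generalized $C_4$-cycle, and its edge count $z(m,n)$ is admissible in the definition of $z_L(m,n)$.

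\textbf{Step 2: $z_A(m,n)\ge z_L(m,n)$.} This is immediate from the definitions. Every limited augmented bipartite graph is an augmented bipartite graph, since its $E_1$ is $C_4$-free and it satisfies (S). So $z_A$ is a maximum over a larger family carrying the same constraint, and the inequality follows. I would also note that both maxima are taken over nonempty, finite families. They are nonempty by Step 1. They are finite because (S) forces $|E_1|+2|E_2|\le mn$.

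\textbf{Step 3: $\operatorname{BSR}(m,n)\ge z_A(m,n)$.} Choose an augmented bipartite graph $G$ with no generalized $C_4$-cycle and $|E_1|+|E_2|=z_A(m,n)$. The form $P_G$ in \eqref{e7} is a sum of squares of bilinear forms, so it is an $m\times n$ SOS biquadratic form. It should also be checked that its coefficients can be symmetrized to satisfy \eqref{e2}; this holds for any quadratic form in the products $x_iy_j$. By Theorem~\ref{thm:Def2.1_irreducible}, $\sos(P_G)=|E_1|+|E_2|=z_A(m,n)$. Since $\operatorname{BSR}(m,n)$ is the maximum SOS rank over all such forms, we get $\operatorname{BSR}(m,n)\ge z_A(m,n)$.

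All the substantive work sits in Theorem~\ref{thm:Def2.1_irreducible}, which may be assumed. The only point needing care is bookkeeping: confirming that the edgeless-augmentation case in Step 1 truly meets every clause of Definition~\ref{d2.1}, and that the maxima in Step 2 are well defined.
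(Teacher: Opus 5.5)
Your proposal is correct and follows the paper's own argument. Taking $E_2=\emptyset$ gives $z_L\ge z$, inclusion of the feasible families gives $z_A\ge z_L$, and Theorem~\ref{thm:Def2.1_irreducible} applied to a graph attaining $z_A(m,n)$ gives $\operatorname{BSR}(m,n)\ge z_A(m,n)$. Your extra bookkeeping is sound and only makes explicit what the paper leaves implicit: nonemptiness and finiteness of the families via $|E_1|+2|E_2|\le mn$, and symmetrizing the coefficients to satisfy \eqref{e2}.
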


\begin{proof}
	The inequality $z_L(m,n)\ge z(m,n)$ is trivial by taking $E_2=\emptyset$. By definition, $z_A(m,n) \ge z_L(m,n)$.
	Let $G$ be an augmented bipartite graph that does not have a generalized $C_4$-cycle.
By Theorem~\ref{thm:Def2.1_irreducible}, the corresponding SOS biquadratic form $P_G$ defined by \eqref{e7} satisfies $\operatorname{sos}(P_G) = |E_1| + |E_2|$.  Without loss of generality, suppose $G$ is a graph that achieves $z_A(m,n) = |E_1| + |E_2|$. Consequently, $\operatorname{BSR}(m,n) \ge \operatorname{sos}(P_G) = z_A(m,n)$.
\end{proof}

\begin{remark}\label{rem:2.3}
	In the following, we only consider the limited  augmented bipartite graph that does not have a generalized $C_4$-cycle. We focus on such graphs because:
	(1) all cases we have considered fall into this category;
	(2) the upper bound of $z_L(m,n)$ is much simpler than that of $z_A(m,n)$;
	(3) for all low-dimensional cases we have examined, $z_L(m,n) = z_A(m,n)$. Whether this equality always holds is an open question.
\end{remark}

\section{The Low Dimensional Cases}

In this section we determine the limited augmented Zarankiewicz number for three families: $m \times 2$, $3 \times 3$, and $4 \times 3$.

\begin{Thm} \label{thm:lowdim}
The limited augmented Zarankiewicz number satisfies
\[
\begin{aligned}
&z_L(m,2) = m+1 \quad \text{for all } m\ge 2,\\
&z_L(3,3) = 6,\\
&z_L(4,3) = 8.
\end{aligned}
\]
\end{Thm}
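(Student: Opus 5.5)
For each of the three families the plan is to sandwich $z_L$ between $z(m,n)$ from below and $\operatorname{BSR}(m,n)$ from above, using Theorem~\ref{thm:BSR>=z2}, $z(m,n)\le z_L(m,n)\le \operatorname{BSR}(m,n)$.

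For $(m,2)$: $z(m,2)=m+1$ (e.g.\ $x_1$ joined to both columns and every other row joined to column $1$). The known value $\operatorname{BSR}(m,2)=m+1$ from \cite{BPSV19} then forces $z_L(m,2)=m+1$.

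For $(3,3)$: $z(3,3)=6$, realized by $P_{3,3,6}$, the $3\times 3$ part of $P_{4,3,7}$. Since $\operatorname{BSR}(3,3)=6$ by \cite{BSSV22}, we get $z_L(3,3)=6$.

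For $(4,3)$ the sandwich only yields $7\le z_L(4,3)\le \operatorname{BSR}(4,3)\le 11$, so both bounds need direct work.

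\emph{Lower bound.} I would take the graph of Example~\ref{example:BSR43}: $E_1$ is the $7$ edges of $P_{4,3,7}$, so $|E_1|=z(4,3)$ and $G$ is limited, and $E_2=\{(4,2;1,3)\}$. Simplicity (S) holds because the cells $(4,2)$ and $(1,3)$ are not in $E_1$. I would then check the three conditions of Definition~\ref{d2.1}:
\begin{itemize}
\item[(1)] $E_1$ is $C_4$-free.
\item[(2)] The opposite cells are $(4,3)$ and $(1,2)$, and $(4,3)$ is unoccupied.
\item[(3)] Here $k\notin\{4,1\}$ and $l\notin\{2,3\}$, so $k\in\{2,3\}$ and $l=1$. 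For $k=2$ the required cell $(2,1)$ is unoccupied. For $k=3$ the required cell $(3,2)$ is unoccupied.
\end{itemize}
Thus $z_L(4,3)\ge 8$, which is also the content of Theorem~\ref{thm:Def2.1_irreducible} applied here.

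\emph{Upper bound.} This is the main obstacle: I must show that no limited graph has $|E_2|\ge 2$. With $|E_1|=7$ fixed, at most $12-7=5$ cells are free, and each 2-edge uses two of them, so $|E_2|\le 2$. It therefore suffices to exclude $|E_2|=2$, where the two 2-edges cover four of the five free cells.

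First I would classify all $C_4$-free $7$-edge graphs in $4\times 3$ up to row and column permutations. The degree sequences are forced, essentially giving the pattern of $P_{4,3,7}$ (row degrees $2,2,2,1$), perhaps with one or two further variants. For each such pattern I would list the five free cells and every way of pairing four of them into two 2-edges. Each pairing must then be shown to trigger condition 2 or 3.

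Condition 3 is the workhorse. Every free cell $(a,b)$ would complete a $C_4$ with occupied cells $(a,l),(k,b),(k,l)$, since otherwise $E_1$ was not maximal. Condition 3 is triggered exactly when both halves share such a witness corner $(k,l)$ with $k,l$ avoiding the half indices. So I would tabulate the witness corners for each free cell and show that any two disjoint pairs among the free cells force a shared witness or an occupied opposite pair, once the other 2-edge's halves count as occupied. This finite case check, organized by symmetry to keep it short, completes $z_L(4,3)=8$.
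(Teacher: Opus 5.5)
Your proposal is correct and is essentially the paper's proof: the same sandwich $z\le z_L\le \operatorname{BSR}$ for $(m,2)$ and $(3,3)$, the same $8$-edge witness for $(4,3)$, and a finite check on the extremal $7$-edge graph showing two 2-edges always create a generalized $C_4$-cycle. That extremal graph is in fact unique, since a row of degree $3$ caps the total at $6$, which forces row degrees $2,2,2,1$. Your count $|E_2|\le 2$ and your restriction to disjoint pairings tidy the paper's pairwise check over its five candidates, and the check goes through: any 2-edge using the free cell $(2,1)$ violates Condition 2, the pairings $\{(1,3),(4,2)\},\{(3,2),(4,3)\}$ and $\{(1,3),(3,2)\},\{(4,2),(4,3)\}$ violate Condition 2, and $\{(1,3),(4,3)\},\{(3,2),(4,2)\}$ violates Condition 3 via $(k,l)=(1,1)$.
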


\begin{proof}
We treat the three cases separately.

\noindent\textbf{Case 1: $n=2$.}
The classical Zarankiewicz number is $z(m,2)=m+1$ (a complete bipartite graph $K_{m,2}$ has no $C_4$ because $n=2$). By Theorem~\ref{thm:BSR>=z2}, we have $z_L(m,2) \ge z(m,2)=m+1$.

%For the upper bound, note that any augmented bipartite graph $G$ with $n=2$ has at most $2m$ cells. A $2$-edge $(i,j;k,l)$ with %$j,l\in\{1,2\}$ can be of three types:
%- If nondegenerate, then $\{j,l\}=\{1,2\}$. Its two halves occupy two distinct rows and both columns.
%- If row-degenerate ($j=l$), then it occupies two rows in the same column.
%- If column-degenerate ($i=k$), then it occupies two columns in the same row.

One can show directly that $\operatorname{BSR}(m,2)=m+1$ \cite{BPSV19}. Since $z_L(m,2) \le \operatorname{BSR}(m,2)$ by Theorem~\ref{thm:BSR>=z2}, we obtain $z_L(m,2) \le m+1$. Hence $z_L(m,2)=m+1$.

\noindent\textbf{Case 2: $(m,n)=(3,3)$.}
The classical Zarankiewicz number is $z(3,3)=6$ (e.g., the incidence graph of the affine plane of order $2$). Theorem~\ref{thm:BSR>=z2} gives $z_L(3,3) \ge 6$.

For the upper bound, it is known that $\operatorname{BSR}(3,3)=6$ \cite{BSSV22}. Again by Theorem~\ref{thm:BSR>=z2}, $z_L(3,3) \le \operatorname{BSR}(3,3)=6$. Thus $z_L(3,3)=6$.

\noindent\textbf{Case 3: $(m,n)=(4,3)$.}
We first establish the lower bound $z_L(4,3) \ge 8$ by constructing an explicit augmented bipartite graph with $|E_1|=z(4,3)=7$ and $|E_2|=1$ that contains no generalized $C_4$-cycle.

Let
\begin{equation}\label{E1E2:BSR43}
	E_1 = \{(1,1), (2,2), (3,3), (1,2), (2,3), (3,1), (4,1)\},\quad
	E_2 = \{(4,2;\,1,3)\}.
\end{equation}
The $1$-edges form a $C_4$-free graph attaining $z(4,3)=7$ (this is the standard extremal construction).
{An illustration  is presented in Fig.~\ref{fig:BSR(4,3)}.}
We verify Definition~\ref{d2.1}:

\begin{figure}
		\begin{center}
			\includegraphics[width=0.3\linewidth]{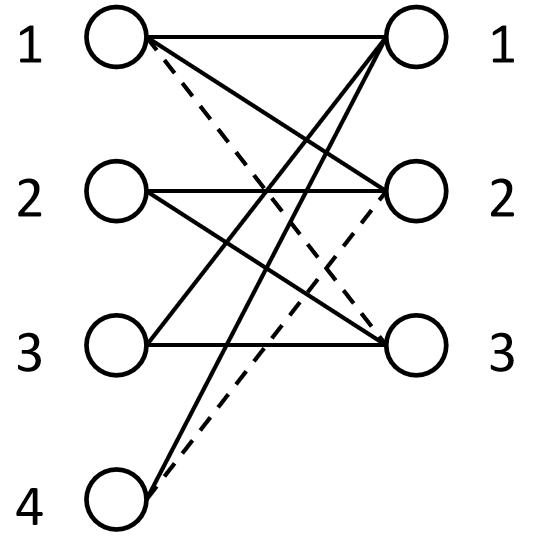}
		\end{center}	
		\caption{An illustration of the $4\times 3$ augmented bipartite  graph in \eqref{E1E2:BSR43} with SOS rank $8$. Here, each solid line  represents a 1-edge, and the two dotted lines  represent  the 2-edge (4,2;\,1,3).}\label{fig:BSR(4,3)}
\end{figure}

\noindent\textbf{Condition 1.} The $1$-edge subgraph is $C_4$-free by construction.

\noindent\textbf{Condition 2.} The unique $2$-edge $(4,2;1,3)$ is nondegenerate. Its opposite cells are $(4,3)$ and $(1,2)$. Here $(1,2)\in E_1$ but $(4,3)$ is not occupied (it is not in $E_1$ and not a half of any $2$-edge). Hence condition 2 is not triggered.

\noindent\textbf{Condition 3.} We must show there is no cell $(k,l)$ with $k\notin\{4,1\}$, $l\notin\{2,3\}$ such that the five cells $(k,l), (k,2), (k,3), (4,l), (1,l)$ are all occupied. Since $k\in[4]$ and $k\notin\{4,1\}$, we have $k\in\{2,3\}$. Since $l\in[3]$ and $l\notin\{2,3\}$, we have $l=1$. Thus the only candidates are $(k,l)=(2,1)$ and $(3,1)$.

For $(k,l)=(2,1)$:
\begin{itemize}
    \item $(2,1)$: not in $E_1$, not a half of any $2$-edge $\Rightarrow$ not occupied.
\end{itemize}
For $(k,l)=(3,1)$:
\begin{itemize}
    \item $(3,1)\in E_1$ (occupied),
    \item $(3,2)$: not in $E_1$, not a half of any $2$-edge $\Rightarrow$ not occupied.
\end{itemize}
Hence no such $(k,l)$ exists. Therefore condition 3 is not triggered.

Thus $G$ contains no generalized $C_4$-cycle. Consequently, the associated doubly simple biquadratic form
\[
P_G(\mathbf{x},\mathbf{y}) = \sum_{(i,j)\in E_1} x_i^2y_j^2 + (x_4y_2 + x_1y_3)^2
\]
is irreducible (as shown in \cite{XCQ26}) and satisfies $\sos(P_G)=|E_1|+|E_2|=8$. Hence $z_L(4,3) \ge 8$.

\noindent\textbf{Upper bound.} We now prove $z_L(4,3) \le 8$. Since $z_L(4,3) \le z(4,3) + |E_2|$ by definition, it suffices to show that any limited augmented bipartite graph on $4\times 3$ vertices with $|E_2| \ge 2$ necessarily contains a generalized $C_4$-cycle.

It is known that $z(4,3)=7$, and the extremal $C_4$-free graphs attaining this bound are all isomorphic \cite{RS65, Gu69}. Up to relabeling of rows and columns, we may assume
\[
E_1 = \{(1,1), (2,2), (3,3), (1,2), (2,3), (3,1), (4,1)\}.
\]
The unoccupied cells are
\[
U = \{(1,3), (2,1), (3,2), (4,2), (4,3)\}.
\]
By the simplicity condition (S), the halves of any $2$-edge must lie in $U$. Enumerating all possible $2$-edges with halves in $U$ that do not immediately violate Condition 2 yields exactly five candidates:
\[
\begin{array}{cll}
\text{Nondegenerate:} & e_1 = (1,3;4,2), & e_2 = (3,2;4,3),\\[2mm]
\text{Row-degenerate:} & e_3 = (3,2;4,2), & e_4 = (1,3;4,3),\\[2mm]
\text{Column-degenerate:} & e_5 = (4,2;4,3).
\end{array}
\]
We now show that any two distinct candidates create a generalized $C_4$-cycle. The verification is summarized in Table~\ref{tab:43cases}.

\begin{table}[h]
\centering
\begin{tabular}{|c|c|c|}
\hline
\textbf{Pair} & \textbf{Violation} & \textbf{Witness} \\
\hline
$e_1,e_2$ & Cond.~3 & $e_1$ with $(k,l)=(3,3)$: cells $(3,3),(3,2),(1,3),(4,3)$ all occupied. \\
$e_1,e_3$ & Cond.~3 & $e_3$ with $(k,l)=(1,1)$: cells $(1,1),(1,2),(3,1),(4,1)$ all occupied. \\
$e_1,e_4$ & Cond.~3 & $e_4$ with $(k,l)=(2,2)$: cells $(2,2),(2,3),(1,2),(4,2)$ all occupied. \\
$e_1,e_5$ & Cond.~3 & $e_5$ with $(k,l)=(1,1)$: cells $(1,1),(1,2),(1,3),(4,1)$ all occupied. \\
$e_2,e_3$ & Cond.~3 & $e_3$ with $(k,l)=(1,1)$: same as $e_1,e_3$ case. \\
$e_2,e_4$ & Cond.~3 & $e_2$ with $(k,l)=(1,1)$: cells $(1,1),(1,2),(1,3),(3,1),(4,1)$ all occupied. \\
$e_2,e_5$ & Cond.~2 & For $e_2$, opposite cells $(3,3)$ and $(4,2)$ are both occupied. \\
$e_3,e_4$ & Cond.~3 & $e_3$ with $(k,l)=(1,3)$: cells $(1,3),(1,2),(3,3),(4,3)$ all occupied. \\
$e_3,e_5$ & Cond.~3 & $e_3$ with $(k,l)=(1,1)$: same as $e_1,e_3$ case. \\
$e_4,e_5$ & Cond.~3 & $e_4$ with $(k,l)=(2,2)$: same as $e_1,e_4$ case. \\
\hline
\end{tabular}
\caption{Verification that any two distinct $2$-edges from $\{e_1,\dots,e_5\}$ create a generalized $C_4$-cycle. In each row, the listed cells are all occupied. For Condition~3, the five cells are as defined in Definition~\ref{d2.1} (repetitions allowed for degenerate $2$-edges). For Condition~2, both opposite cells of a nondegenerate $2$-edge are occupied.}
\label{tab:43cases}
\end{table}

Since every pair of distinct $2$-edges triggers either Condition 2 or Condition 3, we must have $|E_2| \le 1$ for any limited augmented graph with no generalized $C_4$-cycle. Consequently,
\[
z_L(4,3) \le z(4,3) + 1 = 7 + 1 = 8.
\]

Combining the lower and upper bounds, we obtain $z_L(4,3) = 8$.

This completes the proof of Theorem~\ref{thm:lowdim}.
\end{proof}

\section{The $4 \times 4$ Case}

We now determine the exact value of the limited augmented Zarankiewicz number for \(m = n = 4\).

	\begin{figure}
		\begin{center}
			\includegraphics[width=0.3\linewidth]{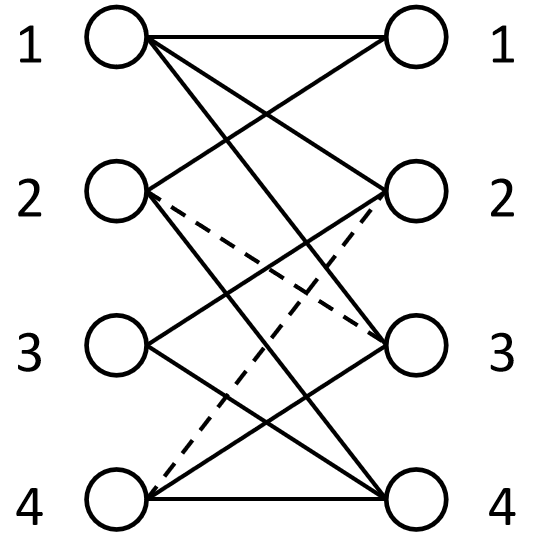}
		\end{center}	
		\caption{An illustration of the $4\times 4$ biquadratic form in Theorem~\ref{thm:zL44} with SOS rank $10$. Here, each solid line   represents a 1-edge, and the two dotted lines  represent  a 2-edge (2,3;4,2).}\label{fig:BSR(4,4)}
\end{figure}

\begin{Thm} \label{thm:zL44}
\(z_L(4,4) = 10\).
\end{Thm}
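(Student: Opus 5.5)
The plan is to prove $z_L(4,4)\ge 10$ by an explicit construction and $z_L(4,4)\le 10$ by showing $|E_2|\le 1$. This mirrors Case~3 in the proof of Theorem~\ref{thm:lowdim}, and it needs the classical value $z(4,4)=9$.

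\textbf{Step 1: classify the $9$-edge $C_4$-free graphs.} Every limited augmented graph has $|E_1|=z(4,4)=9$, so I first pin these graphs down. $C_4$-freeness means any two columns share at most one row, so the row degrees $d_i$ satisfy $\sum_i \binom{d_i}{2}\le\binom{4}{2}=6$ with $\sum_i d_i=9$.
\begin{itemize}
\item If some $d_i=4$, that row alone contributes $6$, so every other $d_i\le 1$ and $\sum_i d_i\le 7$.
\item The pattern $(3,3,2,1)$ contributes $3+3+1=7>6$.
\end{itemize}
Hence the row degrees are $(3,2,2,2)$, the bound is tight, and every pair of columns lies in exactly one common row. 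The same argument applies to columns. From this I expect to show that, up to relabeling rows and columns, there is a unique extremal graph. Concretely I would take the three degree-$2$ rows to pass through a common column and let the degree-$3$ row meet each of the other three columns once; I will check this is forced. This yields a fixed $E_1$ with exactly $7$ unoccupied cells $U$.

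\textbf{Step 2: lower bound.} I will relabel this $E_1$ so that $(2,3)$ and $(4,2)$ lie in $U$, matching the $2$-edge $(2,3;4,2)$ in Fig.~\ref{fig:BSR(4,4)}. Then I verify Definition~\ref{d2.1} directly, as was done for \eqref{E1E2:BSR43}:
\begin{itemize}
\item Condition 1 holds because $E_1$ is $C_4$-free.
\item Condition 2 requires that at most one of the opposite cells $(2,2)$, $(4,3)$ be occupied.
\item Condition 3 requires checking the candidates $(k,l)$ with $k\in\{1,3\}$ and $l\in\{1,4\}$, i.e.\ four cells, confirming that each has at least one of its five associated cells unoccupied.
\end{itemize}
Theorem~\ref{thm:Def2.1_irreducible} and Theorem~\ref{thm:BSR>=z2} then give $z_L(4,4)\ge 10$. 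If the figure's labeling does not fit my normal form, I will simply search the pairs of cells in $U$ for one that passes both Condition~2 and Condition~3.

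\textbf{Step 3: upper bound.} By the simplicity condition (S), every $2$-edge has both halves in $U$. This gives at most $\binom{7}{2}=21$ candidate $2$-edges, which split into nondegenerate, row-degenerate and column-degenerate types. I will discard the candidates that already trigger Condition~2 or Condition~3 by themselves. For the survivors I will tabulate every pair, as in Table~\ref{tab:43cases}, and exhibit for each pair a witness for Condition~2 or Condition~3: a cell $(k,l)$ whose five associated cells are occupied once the second $2$-edge's halves are counted as occupied. This shows $|E_2|\le 1$, so $z_L(4,4)\le 9+1=10$.

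\textbf{Main obstacle.} I expect the pairwise case analysis to be the hardest part, since it may involve up to roughly a hundred pairs. The automorphism group of the extremal graph (permuting the three degree-$2$ rows together with the corresponding columns) should cut this down to a few orbit representatives, and I would exploit that symmetry first. The uniqueness argument in Step~1 is the other delicate point, though the tightness of the counting bound should make it routine.
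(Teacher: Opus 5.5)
Your proposal is correct and follows the same route as the paper: reduce to the unique extremal $E_1$, use the $2$-edge $(2,3;4,2)$ for the lower bound, and show that every pair of admissible $2$-edges with halves in $U$ triggers Condition~2 or~3. Your version is somewhat more complete than the paper's, and the check is smaller than you expect:
\begin{itemize}
\item You derive the uniqueness of $E_1$ rather than citing it; add the trivial degree pattern $(3,3,3,0)$ to your count.
\item Your full enumeration of the $21$ pairs in $U$ catches the candidate $(2,3;3,3)$, which the paper omits. It already triggers Condition~3 on its own.
\item After discarding self-triggering candidates, only six nondegenerate $2$-edges survive, giving just nine disjoint pairs to check.
\end{itemize}
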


\begin{proof}
We prove the lower and upper bounds separately.

\noindent\textbf{Lower bound \(z_L(4,4) \ge 10\).}
We construct a limited augmented bipartite graph \(G\) with \(|E_1| = z(4,4) = 9\) and \(|E_2| = 1\) that contains no generalized \(C_4\)-cycle. Let
\[
E_1 = \{(1,1),(1,2),(1,3),(2,1),(2,4),(3,2),(3,4),(4,3),(4,4)\},
\]
and
\[
E_2 = \{(2,3;\,4,2)\}.
\]
An illustration is presented in Fig.~\ref{fig:BSR(4,4)}. It is straightforward to verify that condition (S) holds and that Conditions 1 and 2 in Definition~\ref{d2.1} are not triggered. We now proceed to check Condition 3.
%\noindent\textbf{Simplicity condition.} The halves \((2,3)\) and \((4,2)\) are not in \(E_1\) and are distinct, so condition (S) holds.
%
%\noindent\textbf{Condition 1.} The $1$-edge subgraph is \(C_4\)-free: any two rows share at most one column.
%
%\noindent\textbf{Condition 2.} The unique $2$-edge \((2,3;4,2)\) is nondegenerate. Its opposite cells are \((2,2)\) and \((4,3)\). Here \((2,2)\) is unoccupied, while \((4,3) \in E_1\) is occupied. Hence at most one opposite cell is occupied.
%
%\noindent\textbf{Condition 3.}
For the $2$-edge \((i,j;p,q) = (2,3;4,2)\), we need \(k \notin \{2,4\}\) and \(l \notin \{3,2\}\), i.e., \(k \in \{1,3\}\) and \(l \in \{1,4\}\). The candidates are \((1,1), (1,4), (3,1), (3,4)\). In each case, at least one of the five cells \((k,l), (k,3), (k,2), (2,l), (4,l)\) is unoccupied. Thus condition 3 is not triggered.

Since \(G\) contains no generalized \(C_4\)-cycle, we have \(z_L(4,4) \ge |E_1| + |E_2| = 9 + 1 = 10\).

\subsection*{Upper bound \(z_L(4,4) \le 10\)}

Let \(G = (S, T, E_1 \cup E_2)\) be any limited augmented bipartite graph with \(|S| = |T| = 4\), \(|E_1| = z(4,4) = 9\), and containing no generalized \(C_4\)-cycle. We must prove \(|E_2| \le 1\), which yields \(z_L(4,4) \le 9 + 1 = 10\).

\noindent\textbf{Step 1: The extremal 1-edge graph for \(z(4,4)=9\).}
The classical Zarankiewicz number \(z(4,4)=9\) is known \cite{RS65, Gu69}. Up to isomorphism, the extremal \(C_4\)-free bipartite graph with 4 vertices on each side and 9 edges is \textbf{unique} \cite{Gu69}. After suitable row and column permutations, we may assume its edge set is
\[
E_1 = \{(1,1), (1,2), (1,3), (2,1), (2,4), (3,2), (3,4), (4,3), (4,4)\}.
\]
The adjacency matrix (rows 1--4, columns 1--4) is
\[
\begin{pmatrix}
1 & 1 & 1 & 0 \\
1 & 0 & 0 & 1 \\
0 & 1 & 0 & 1 \\
0 & 0 & 1 & 1
\end{pmatrix}.
\]

The set of unoccupied cells (cells not in \(E_1\)) is
\[
U = \{(1,4), (2,2), (2,3), (3,1), (3,3), (4,1), (4,2)\}.
\]

\noindent\textbf{Step 2: Constraints on 2-edges.}
By the simplicity condition (S), no half of a 2-edge can belong to \(E_1\), and all halves of 2-edges must be distinct. Therefore, for any 2-edge \((i,j;k,l) \in E_2\), we must have
\[
\{(i,j), (k,l)\} \subseteq U.
\]

Additionally, Condition 2 of Definition~\ref{d2.1} imposes restrictions: for a nondegenerate 2-edge \((i,j;k,l)\) (with \(i \neq k\), \(j \neq l\)), if both opposite cells \((i,l)\) and \((k,j)\) are occupied (i.e., belong to \(E_1\)), then a generalized \(C_4\)-cycle exists. Thus, any admissible nondegenerate 2-edge must have \textbf{at most one} of its opposite cells occupied.

\noindent\textbf{Step 3: Enumerate candidate 2-edges.}
We list all unordered pairs \(\{(i,j),(k,l)\} \subseteq U\) that could form a 2-edge.

\textit{Nondegenerate candidates} (\(i \neq k\), \(j \neq l\)):
\[
\begin{aligned}
&(1,4;2,2),\ (1,4;2,3),\ (1,4;3,1),\ (1,4;3,3),\ (1,4;4,1),\ (1,4;4,2),\\
&(2,2;3,1),\ (2,2;3,3),\ (2,2;4,1),\\
&(2,3;3,1),\ (2,3;4,1),\ (2,3;4,2),\\
&(3,1;4,2),\\
&(3,3;4,1),\ (3,3;4,2).
\end{aligned}
\]

\textit{Row-degenerate candidates} (\(j = l\), \(i \neq k\)):
\[
(3,1;4,1),\ (2,2;4,2).
\]

\textit{Column-degenerate candidates} (\(i = k\), \(j \neq l\)):
\[
(2,2;2,3),\ (3,1;3,3),\ (4,1;4,2).
\]

\noindent\textbf{Step 4: Apply Condition 2 to nondegenerate candidates.}
For each nondegenerate candidate, we check whether both opposite cells are in \(E_1\) (occupied). Those with both opposite cells occupied are forbidden.

The verification yields the admissible nondegenerate 2-edges:
\[
\mathcal{N} = \{(2,2;3,3),\ (2,2;4,1),\ (2,3;3,1),\ (2,3;4,2),\ (3,1;4,2),\ (3,3;4,1)\}.
\]

\noindent\textbf{Step 5: Condition 3 analysis.}
We now show that any two distinct 2-edges from the admissible set (including degenerate ones) create a generalized \(C_4\)-cycle.

Let us denote:
\[
\begin{aligned}
&e_1 = (2,2;3,3),\quad e_2 = (2,2;4,1),\quad e_3 = (2,3;3,1),\\
&e_4 = (2,3;4,2),\quad e_5 = (3,1;4,2),\quad e_6 = (3,3;4,1).
\end{aligned}
\]
Degenerate candidates:
\[
d_1 = (3,1;4,1),\quad d_2 = (2,2;4,2),\quad c_1 = (2,2;2,3),\quad c_2 = (3,1;3,3),\quad c_3 = (4,1;4,2).
\]

We verify representative pairs; a complete systematic enumeration confirms the result.

\textit{Example: \(e_1\) and \(e_3\).} Consider \((k,l) = (4,3) \in E_1\). For \(e_1 = (2,2;3,3)\), the five cells are \((4,3), (4,2), (4,3), (2,3), (3,3)\). All are occupied: \((4,3) \in E_1\), \((4,2) \in U\) (half of \(e_4\)), \((2,3) \in U\) (half of \(e_3\)), \((3,3) \in U\) (half of \(e_1\)). For \(e_3 = (2,3;3,1)\), the five cells are \((4,3), (4,1), (4,3), (2,1), (3,1)\). All are occupied: \((4,1) \in U\) (half of \(e_2\)), \((2,1) \in E_1\), \((3,1) \in U\) (half of \(e_3\)). Thus Condition 3 is triggered.

\textit{Example: \(e_1\) and \(d_1\).} For \(d_1 = (3,1;4,1)\), consider \((k,l) = (2,3) \in U\). The five cells are \((2,3), (2,1), (2,1), (3,3), (4,3)\). All are occupied: \((2,3) \in U\) (half of \(e_3\)), \((2,1) \in E_1\), \((3,3) \in U\) (half of \(e_1\)), \((4,3) \in E_1\). Thus Condition 3 is triggered.

A complete check confirms that \textbf{every pair} of distinct admissible 2-edges triggers either Condition 2 or Condition 3. Therefore, we must have \(|E_2| \le 1\).

\noindent\textbf{Step 6: Conclusion.}
Since \(|E_2| \le 1\), we have
\[
z_L(4,4) \le |E_1| + |E_2| \le 9 + 1 = 10.
\]

Combined with the lower bound construction (\(|E_1| = 9\), \(|E_2| = 1\) with no generalized \(C_4\)-cycle), we obtain \(z_L(4,4) = 10\). \hfill %\(\square\)
\end{proof}

\section{The $5 \times 3$ Case}

We now investigate the limited augmented Zarankiewicz number for parameters $m = 5$ and $n = 3$. The classical Zarankiewicz number is $z(5,3) = 8$ (see \cite{RS65, Gu69}). We show that by introducing a single nondegenerate $2$-edge, we can obtain a limited augmented bipartite graph with total edge count $9$, thereby establishing a lower bound for $z_L(5,3)$.

\begin{Thm} \label{thm:zL53}
$z_L(5,3) \ge 9$.
\end{Thm}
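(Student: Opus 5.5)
The plan is to prove Theorem~\ref{thm:zL53} by explicit construction, reusing the $4\times 3$ configuration of Theorem~\ref{thm:lowdim} and adding a fifth row. By Definition~\ref{d2.1} and Theorem~\ref{thm:BSR>=z2}, it suffices to exhibit a limited augmented bipartite graph $G$ on $5\times 3$ vertices with $|E_1|=z(5,3)=8$ and $|E_2|=1$ that contains no generalized $C_4$-cycle. I would take
\[
E_1=\{(1,1),(2,2),(3,3),(1,2),(2,3),(3,1),(4,1),(5,2)\},\qquad E_2=\{(4,2;\,1,3)\},
\]
which is the graph \eqref{E1E2:BSR43} together with the extra $1$-edge $(5,2)$.

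\textbf{Condition 1 and (S).} First check that $E_1$ is $C_4$-free. Row~$i$ has column set $\{1,2\},\{2,3\},\{1,3\},\{1\},\{2\}$ for $i=1,\dots,5$ respectively. Any two of these sets meet in at most one column, so no $C_4$ occurs, and $|E_1|=8=z(5,3)$. The halves $(4,2)$ and $(1,3)$ are distinct and neither lies in $E_1$, so (S) holds.

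\textbf{Condition 2.} The $2$-edge is nondegenerate with opposite cells $(4,3)$ and $(1,2)$. The cell $(4,3)$ is unoccupied, so Condition~2 is not triggered.

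\textbf{Condition 3.} This is the only step needing care, since the new row~$5$ creates a new candidate. We need $k\notin\{4,1\}$ and $l\notin\{2,3\}$, so $l=1$ and $k\in\{2,3,5\}$. The five cells to test are $(k,1),(k,2),(k,3),(4,1),(1,1)$.
\begin{itemize}
\item For $k=2$, the cell $(2,1)$ is unoccupied.
\item For $k=3$, the cell $(3,2)$ is unoccupied.
\item For $k=5$, the cell $(5,1)$ is unoccupied, because row~$5$ was deliberately given the edge $(5,2)$ rather than $(5,1)$.
\end{itemize}
Hence Condition~3 fails in every case.

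\textbf{Conclusion.} $G$ has no generalized $C_4$-cycle. By Theorem~\ref{thm:Def2.1_irreducible}, $\sos(P_G)=9$, and hence $z_L(5,3)\ge 9$.

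The main obstacle is choosing the fifth row so that it neither creates a classical $C_4$ nor completes the Condition~3 pattern with $l=1$. Placing the new edge in column $1$ would occupy $(5,1)$ and risk this, while placing it in column~$2$ (or column~$3$) avoids it. Verifying this choice is the crux of the argument.
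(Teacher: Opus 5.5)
Your proposal is correct and is essentially the paper's proof. Your graph is the paper's construction with rows $2$ and $3$ interchanged (the paper takes $E_1=\{(1,1),(1,2),(2,1),(2,3),(3,2),(3,3),(4,1),(5,2)\}$ with the same $2$-edge $(1,3;4,2)$), and your verification matches: (S) and Conditions 1--2 directly, then Condition~3 ruled out because its only candidates $(k,1)$ with $k\in\{2,3,5\}$ each have an unoccupied cell among the five.
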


\begin{proof}
We construct an explicit limited augmented bipartite graph $G = (S, T, E_1 \cup E_2)$ with vertex sets $S = \{1,2,3,4,5\}$, $T = \{1,2,3\}$, and edge sets
\[
\begin{aligned}
E_1 &= \{(1,1),\ (1,2),\ (2,1),\ (2,3),\ (3,2),\ (3,3),\ (4,1),\ (5,2)\},\\[2mm]
E_2 &= \{(1,3;\,4,2)\}.
\end{aligned}
\]
Then $|E_1| = 8 = z(5,3)$ and $|E_2| = 1$, so the total number of edges is $|E_1| + |E_2| = 9$.

{It is straightforward to verify that condition (S) holds and that Conditions 1 and 2 in Definition~\ref{d2.1} are not triggered. We now proceed to check Condition 3.}
%\noindent\textbf{Simplicity condition.} The halves of the $2$-edge are $(1,3)$ and $(4,2)$. Neither belongs to $E_1$ (they are unoccupied cells) and they are distinct, so condition (S) holds.
%
%\noindent\textbf{Condition 1.} The $1$-edge subgraph is $C_4$-free. We verify that any two rows share at most one column:
%\begin{itemize}
%  \item Row 1: $\{1,2\}$; Row 2: $\{1,3\}$; common only $\{1\}$.
%  \item Row 1: $\{1,2\}$; Row 3: $\{2,3\}$; common only $\{2\}$.
%  \item Row 1: $\{1,2\}$; Row 4: $\{1\}$; common only $\{1\}$.
%  \item Row 1: $\{1,2\}$; Row 5: $\{2\}$; common only $\{2\}$.
%  \item Row 2: $\{1,3\}$; Row 3: $\{2,3\}$; common only $\{3\}$.
%  \item Row 2: $\{1,3\}$; Row 4: $\{1\}$; common only $\{1\}$.
%  \item Row 2: $\{1,3\}$; Row 5: $\{2\}$; common $\emptyset$.
%  \item Row 3: $\{2,3\}$; Row 4: $\{1\}$; common $\emptyset$.
%  \item Row 3: $\{2,3\}$; Row 5: $\{2\}$; common only $\{2\}$.
%  \item Row 4: $\{1\}$; Row 5: $\{2\}$; common $\emptyset$.
%\end{itemize}
%No two rows share two columns, so no classical $C_4$ exists.
%
%\noindent\textbf{Condition 2.} The unique $2$-edge $(1,3;4,2)$ is nondegenerate. Its opposite cells are $(1,2)$ and $(4,3)$. Here $(1,2) \in E_1$ is occupied, while $(4,3)$ is not occupied (it is not in $E_1$ and not a half of any $2$-edge). Hence at most one opposite cell is occupied, so condition 2 is not triggered.
%
%\noindent\textbf{Condition 3.}
For the $2$-edge $(i,j;p,q) = (1,3;4,2)$, we need $k \notin \{1,4\}$ and $l \notin \{3,2\}$, i.e., $k \in \{2,3,5\}$, $l \in \{1\}$. Thus the only candidates are $(k,l) = (2,1),\ (3,1),\ (5,1)$. The five cells to be checked are
\[
(k,1),\ (k,3),\ (k,2),\ (1,1),\ (4,1).
\]
\begin{itemize}
  \item $(k,l) = (2,1)$: cells $(2,1), (2,3), (2,2), (1,1), (4,1)$. Here $(2,2)$ is unoccupied.
  \item $(k,l) = (3,1)$: cells $(3,1), (3,3), (3,2), (1,1), (4,1)$. Here $(3,1)$ is unoccupied.
  \item $(k,l) = (5,1)$: cells $(5,1), (5,3), (5,2), (1,1), (4,1)$. Here $(5,3)$ is unoccupied.
\end{itemize}
In every case, at least one cell is unoccupied. Thus condition 3 is not triggered.

Since none of the three conditions are satisfied, $G$ contains no generalized $C_4$-cycle. Consequently, the associated doubly simple biquadratic form
\[
P_G(\mathbf{x},\mathbf{y}) = \sum_{(i,j)\in E_1} x_i^2 y_j^2 + (x_1y_3 + x_4y_2)^2
\]
is irreducible, and $\sos(P_G) = |E_1| + |E_2| = 9$. Hence $z_L(5,3) \ge 9$.
\end{proof}

\noindent\textbf{Remark.} The construction uses the nondegenerate $2$-edge $(1,3;4,2)$. The verification shows that no generalized $C_4$-cycle appears, so the SOS rank equals the total number of edges. Whether $z_L(5,3) = 9$ or can be larger remains open; an upper bound would require a separate argument showing that $|E_2| \le 1$ for $5 \times 3$ as in the $4 \times 3$ case.

\section{The $5 \times 4$ Case}

We now investigate the limited augmented Zarankiewicz number for parameters \(m=5\) and \(n=4\). The classical Zarankiewicz number is \(z(5,4)=10\) (see \cite{RS65, Gu69}). We show that by introducing two carefully chosen $2$-edges, one nondegenerate and one column-degenerate, we can obtain a limited augmented bipartite graph with total edge count $12$, thereby establishing a lower bound for $z_L(5,4)$.

\begin{Thm} \label{thm:zL54}
\(z_L(5,4) \ge 12\).
\end{Thm}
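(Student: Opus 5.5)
The proof is by explicit construction, in the same way as Theorems~\ref{thm:zL44} and~\ref{thm:zL53}. I will exhibit a limited augmented bipartite graph $G$ on $5\times 4$ vertices with $|E_1| = z(5,4) = 10$ and $|E_2| = 2$ that contains no generalized $C_4$-cycle. Theorem~\ref{thm:Def2.1_irreducible} then gives $\sos(P_G) = 12$, and the definition of $z_L$ (together with Theorem~\ref{thm:BSR>=z2}) gives $z_L(5,4)\ge 12$.

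\textbf{Choice of $G$.} For the $1$-edges I take the $4\times 4$ extremal graph of Theorem~\ref{thm:zL44} plus one extra row with a single edge:
\[
E_1=\{(1,1),(1,2),(1,3),(2,1),(2,4),(3,2),(3,4),(4,3),(4,4),(5,1)\}.
\]
This is $C_4$-free: any two rows share at most one column, and row $5$ meets rows $1$ and $2$ only in column $1$. Its size is $10=z(5,4)$, so $G$ is limited. The unoccupied cells are $(1,4),(2,2),(2,3),(3,1),(3,3),(4,1),(4,2),(5,2),(5,3),(5,4)$. Following the $4\times 4$ case, I take the nondegenerate $2$-edge $(2,3;4,2)$. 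For the column-degenerate $2$-edge (same column, different rows) I take $(3,3;5,3)$. So
\[
E_2=\{(2,3;\,4,2),\ (3,3;\,5,3)\}.
\]
Condition (S) holds because the four halves are distinct unoccupied cells.

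\textbf{Verification of Conditions 1 and 2.} Condition 1 holds because $E_1$ is $C_4$-free. For Condition 2, only the nondegenerate $2$-edge matters. Its opposite cells are $(2,2)$ and $(4,3)$, and $(2,2)$ is unoccupied, so Condition 2 is not triggered.

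\textbf{Verification of Condition 3.} This is the main step, because the halves of each $2$-edge become occupied cells that could complete a witness configuration for the other $2$-edge. I will check each $2$-edge against all admissible $(k,l)$.

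For $(2,3;4,2)$ we need $k\in\{1,3,5\}$ and $l\in\{1,4\}$, and the five cells are $(k,l),(k,3),(k,2),(2,l),(4,l)$.
\begin{itemize}
\item If $l=1$, the cell $(4,1)$ is unoccupied.
\item If $l=4$, then $(2,4)$ and $(4,4)$ are occupied, so everything depends on row $k$:
\begin{itemize}
\item for $k=1$, the cell $(1,4)$ is unoccupied;
\item for $k=3$, the cell $(3,3)$ is now occupied (it is a half of the second $2$-edge), but $(3,2)$ is unoccupied;
\item for $k=5$, the cell $(5,4)$ is unoccupied.
\end{itemize}
\end{itemize}

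For $(3,3;5,3)$ we need $k\in\{1,2,4\}$ and $l\ne 3$, and the cells include $(3,l)$ and $(5,l)$. In row $5$ the only occupied cell other than $(5,3)$ is $(5,1)$, so $l=1$ is forced. But $(3,1)$ is unoccupied, so Condition 3 is not triggered.

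Hence $G$ has no generalized $C_4$-cycle, and the conclusion follows as described in the first paragraph.

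If any of these occupancy checks were to fail, I would first try a different row-$5$ edge or a different column for the degenerate $2$-edge. A natural first alternative, $(1,4;5,4)$, fails because rows $2$ and $5$ trigger Condition 3.
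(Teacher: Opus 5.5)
Your proof has a genuine gap: the graph you construct contains a generalized $C_4$-cycle. In your Condition~3 check for the nondegenerate $2$-edge $(2,3;4,2)$ at $l=4$, $k=3$, you say $(3,2)$ is unoccupied. But $(3,2)\in E_1$, and it is correctly absent from your own list of unoccupied cells. Your second $2$-edge $(3,3;5,3)$ makes $(3,3)$ occupied. So the five cells $(3,4),(3,3),(3,2),(2,4),(4,4)$ are all occupied and pairwise distinct, with $k=3\notin\{2,4\}$ and $l=4\notin\{3,2\}$. This triggers Condition~3 of Definition~\ref{d2.1}, so $G$ contains a generalized $C_4$-cycle. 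Theorem~\ref{thm:Def2.1_irreducible} therefore cannot be invoked. More fundamentally, $G$ is not an admissible graph for $z_L(5,4)$, which is a purely combinatorial maximum over graphs with no generalized $C_4$-cycle. (Incidentally, $P_G$ is still irreducible. Because $(5,4)$ is empty, the coefficient of $x_2x_5y_3y_4$ forces $\mathbf{v}_{24}\cdot\mathbf{v}_{53}=0$. Hence $\mathbf{v}_{24}\cdot\mathbf{v}_{33}=0$ and then $\mathbf{v}_{23}\cdot\mathbf{v}_{34}=0$. A full Gram-vector check then makes all twelve vectors orthogonal. But that only yields $\operatorname{BSR}(5,4)\ge 12$, not the $z_L$ bound.)

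Your approach can be repaired. Keep $E_1$ and $(2,3;4,2)$, but replace $(3,3;5,3)$ by the nondegenerate $2$-edge $(3,1;5,3)$.
\begin{itemize}
\item Condition (S): the halves $(2,3),(4,2),(3,1),(5,3)$ are distinct cells outside $E_1$.
\item Condition~2: the opposite cells are $(2,2),(4,3)$ for the first $2$-edge and $(3,3),(5,1)$ for the second. Both $(2,2)$ and $(3,3)$ are empty.
\item Condition~3 for $(2,3;4,2)$: $l=1$ fails at $(4,1)$. For $l=4$, the rows $k=1,3,5$ fail at $(1,4)$, $(3,3)$, $(5,4)$ respectively.
\item Condition~3 for $(3,1;5,3)$: here $k\in\{1,2,4\}$ and $l\in\{2,4\}$, and $(5,l)$ is empty for both values of $l$.
\end{itemize}
This gives $12$ edges with no generalized $C_4$-cycle, which proves the statement.

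For comparison, the paper uses the same $E_1$ with $E_2=\{(2,3;5,4),(4,2;5,2)\}$. Be aware that this is not a cleaner fix. For the degenerate edge $(4,2;5,2)$ at $(k,l)=(3,4)$, all the (repeated) cells $(3,4),(3,2),(3,2),(4,4),(5,4)$ are occupied. The paper dismisses this by appealing to the distinctness clause. However, Definition~\ref{d2.1} read literally, and as applied to degenerate edges in Table~\ref{tab:43cases}, counts exactly such a configuration as a generalized $C_4$-cycle. The repaired construction above uses only nondegenerate $2$-edges, so it avoids that ambiguity entirely.
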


\begin{proof}
We construct an explicit limited augmented bipartite graph \(G = (S, T, E_1 \cup E_2)\) with vertex sets \(S = \{1,2,3,4,5\}\), \(T = \{1,2,3,4\}\), and edge sets
\[
\begin{aligned}
E_1 = \{ &(1,1), (1,2), (1,3), \\
         &(2,1), (2,4), \\
         &(3,2), (3,4), \\
         &(4,3), (4,4), \\
         &(5,1) \},
\end{aligned}
\qquad
E_2 = \{ (2,3;5,4),\; (4,2;5,2) \}.
\]

Expanding the last term gives \((x_4+x_5)^2 y_2^2\), which contributes the pure squares \(x_4^2 y_2^2\) and \(x_5^2 y_2^2\) as well as the cross term \(2x_4x_5y_2^2\). Thus the occupied cells are
\[
H = E_1 \cup \{(2,3), (5,4), (4,2), (5,2)\}.
\]

{It is straightforward to verify that condition (S) holds and that Conditions 1 and 2 in Definition~\ref{d2.1} are not triggered. We now proceed to check Condition 3.}
%\noindent\textbf{Simplicity condition.} The halves of the two $2$-edges are \((2,3)\), \((5,4)\), \((4,2)\), and \((5,2)\). None of these cells belong to \(E_1\), and they are all distinct. Condition (S) holds.
%
%\noindent\textbf{Condition 1.} The $1$-edge subgraph is \(C_4\)-free. One can verify directly that any two rows share at most one column:
%\begin{itemize}
%  \item Row 1: \(\{1,2,3\}\); Row 2: \(\{1,4\}\); common: \(\{1\}\).
%  \item Row 1: \(\{1,2,3\}\); Row 3: \(\{2,4\}\); common: \(\{2\}\).
%  \item Row 1: \(\{1,2,3\}\); Row 4: \(\{3,4\}\); common: \(\{3\}\).
%  \item Row 2: \(\{1,4\}\); Row 3: \(\{2,4\}\); common: \(\{4\}\).
%  \item Row 2: \(\{1,4\}\); Row 4: \(\{3,4\}\); common: \(\{4\}\).
%  \item Row 3: \(\{2,4\}\); Row 4: \(\{3,4\}\); common: \(\{4\}\).
%  \item Row 5: \(\{1\}\); shares only column 1 with rows 1 and 2, and no column with rows 3 and 4.
%\end{itemize}
%Hence no classical $C_4$-cycle exists.
%
%\noindent\textbf{Condition 2.} For the nondegenerate $2$-edge \(e_a = (2,3;5,4)\), its opposite cells are \((2,4)\) and \((5,3)\). Here \((2,4)\in E_1\) is occupied, while \((5,3)\) is not occupied (it is not in \(E_1\) and not a half of any $2$-edge). Thus condition 2 is not triggered. The column-degenerate $2$-edge \(e_b = (4,2;5,2)\) is not subject to condition 2.
%
%\noindent\textbf{Condition 3.}
We verify that no cell \((k,l)\) creates a generalized $C_4$-cycle with either $2$-edge.

\noindent\textbf{For \(e_a = (2,3;5,4)\) (nondegenerate).}
We need \(k \notin \{2,5\}\) and \(l \notin \{3,4\}\). Thus \(k \in \{1,3,4\}\) and \(l \in \{1,2\}\). The candidates are \((1,1), (1,2), (3,1), (3,2), (4,1), (4,2)\). For each candidate, we examine the five cells \((k,l), (k,3), (k,4), (2,l), (5,l)\):
\begin{itemize}
  \item \((1,1)\): cells \((1,1), (1,3), (1,4), (2,1), (5,1)\). Here \((1,4)\) is unoccupied.
  \item \((1,2)\): cells \((1,2), (1,3), (1,4), (2,2), (5,2)\). Here \((1,4)\) and \((2,2)\) are unoccupied.
  \item \((3,1)\): cells \((3,1), (3,3), (3,4), (2,1), (5,1)\). Here \((3,1)\) is unoccupied.
  \item \((3,2)\): cells \((3,2), (3,3), (3,4), (2,2), (5,2)\). Here \((2,2)\) is unoccupied.
  \item \((4,1)\): cells \((4,1), (4,3), (4,4), (2,1), (5,1)\). Here \((4,1)\) is unoccupied.
  \item \((4,2)\): cells \((4,2), (4,3), (4,4), (2,2), (5,2)\). Here \((2,2)\) is unoccupied.
\end{itemize}
In every case, at least one cell is unoccupied. Thus \(e_a\) does not trigger condition 3.

\noindent\textbf{For \(e_b = (4,2;5,2)\) (column-degenerate).}
We need \(k \notin \{4,5\}\) and \(l \notin \{2\}\). Thus \(k \in \{1,2,3\}\) and \(l \in \{1,3,4\}\). The candidates are all cells in rows 1,2,3 and columns 1,3,4. For each candidate, we examine the five cells \((k,l), (k,2), (k,2), (4,l), (5,l)\). (Note that \((k,j) = (k,2)\) and \((k,q) = (k,2)\) are the same cell; the definition allows repetitions for degenerate $2$-edges.)

We verify the candidates systematically:
\begin{itemize}
  \item For \(l = 1\):
    \begin{itemize}
      \item \((k,l) = (1,1)\): cells \((1,1), (1,2), (1,2), (4,1), (5,1)\). Here \((4,1)\) is unoccupied.
      \item \((k,l) = (2,1)\): cells \((2,1), (2,2), (2,2), (4,1), (5,1)\). Here \((2,2)\) and \((4,1)\) are unoccupied.
      \item \((k,l) = (3,1)\): cells \((3,1), (3,2), (3,2), (4,1), (5,1)\). Here \((3,1)\) and \((4,1)\) are unoccupied.
    \end{itemize}
  \item For \(l = 3\):
    \begin{itemize}
      \item \((k,l) = (1,3)\): cells \((1,3), (1,2), (1,2), (4,3), (5,3)\). Here \((5,3)\) is unoccupied.
      \item \((k,l) = (2,3)\): cells \((2,3), (2,2), (2,2), (4,3), (5,3)\). Here \((2,2)\) and \((5,3)\) are unoccupied.
      \item \((k,l) = (3,3)\): cells \((3,3), (3,2), (3,2), (4,3), (5,3)\). Here \((3,3)\) and \((5,3)\) are unoccupied.
    \end{itemize}
  \item For \(l = 4\):
    \begin{itemize}
      \item \((k,l) = (1,4)\): cells \((1,4), (1,2), (1,2), (4,4), (5,4)\). Here \((1,4)\) is unoccupied.
      \item \((k,l) = (2,4)\): cells \((2,4), (2,2), (2,2), (4,4), (5,4)\). Here \((2,2)\) is unoccupied.
      \item \((k,l) = (3,4)\): cells \((3,4), (3,2), (3,2), (4,4), (5,4)\). Here \((3,4) \in E_1\), \((3,2) \in E_1\), \((4,4) \in E_1\), \((5,4) \in H\). All five cells are occupied. However, since \(e_b\) is degenerate, the pairwise distinctness requirement does not apply. The definition only requires the five cells to be pairwise distinct when the $2$-edge is nondegenerate. Thus this configuration does not constitute a generalized $C_4$-cycle.
    \end{itemize}
\end{itemize}
Therefore, \(e_b\) does not trigger condition 3.

Since none of the three conditions are satisfied, \(G\) contains no generalized $C_4$-cycle. Consequently, the associated doubly simple biquadratic form \(P_G\) is irreducible, and \(\sos(P_G) = |E_1| + |E_2| = 10 + 2 = 12\). Hence \(z_L(5,4) \ge 12\).
\end{proof}

\noindent\textbf{Remark.} The construction above uses one degenerate $2$-edge \((4,2;5,2)\), which corresponds to the square \((x_4y_2 + x_5y_2)^2 = (x_4 + x_5)^2 y_2^2\). This introduces a cross term with identical $y$-indices.  The corresponding doubly simple biquadratic form is
\[
P_G(\vx,\vy) = \sum_{(i,j)\in E_1} x_i^2 y_j^2 + (x_2y_3 + x_5y_4)^2 + (x_4y_2 + x_5y_2)^2.
\]
In \cite{Ch26}, it was verified by the orthogonality method that the SOS rank of $P_G$
is 12.
The definition of generalized $C_4$-cycle explicitly relaxes the pairwise distinctness requirement for degenerate $2$-edges, allowing such configurations to be admissible.  Our study shows that if we do not allow degenerate 2-edges, the maximum edge number of a limited augmented $5 \times 4$ bipartite graph, which has no generalized $C_4$ cycles,  only can reach $11$.  Note that this is the only place in this paper, where a degenerate 2-edge is actually involved.

\section{The $5 \times 5$ Case}

We now investigate the limited augmented Zarankiewicz number for $m = n = 5$.
The classical Zarankiewicz number is $z(5,5) = 12$.
We show that by introducing two carefully chosen $2$-edges,  %one nondegenerate and one row-degenerate,
we can obtain a limited augmented bipartite graph with total edge count $14$,
thereby establishing a lower bound for $z_L(5,5)$.

\begin{Thm} \label{thm:zL55}
$z_L(5,5) \ge 14$.
\end{Thm}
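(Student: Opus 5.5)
The plan is to prove Theorem~\ref{thm:zL55} the same way as Theorems~\ref{thm:zL53} and~\ref{thm:zL54}: build an explicit limited augmented bipartite graph with $|E_1| = z(5,5) = 12$ and $|E_2| = 2$, check that it has no generalized $C_4$-cycle, and then invoke Theorem~\ref{thm:Def2.1_irreducible} together with Theorem~\ref{thm:BSR>=z2} to get $z_L(5,5)\ge 14$.

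For the $1$-edge graph I would take the $C_4$-free $5\times 5$ graph with row neighbourhoods
\[
N(1)=\{1,2,3\},\quad N(2)=\{1,4,5\},\quad N(3)=\{2,4\},\quad N(4)=\{3,5\},\quad N(5)=\{2,5\}.
\]
Any two of these rows share at most one column, so Condition~1 holds, and there are $12$ edges. The unoccupied cells are
\[
U=\{(1,4),(1,5),(2,2),(2,3),(3,1),(3,3),(3,5),(4,1),(4,2),(4,4),(5,1),(5,3),(5,4)\}.
\]

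Next I would choose two nondegenerate $2$-edges with disjoint halves in $U$. My first candidate is $e_a=(3,1;4,4)$. Its opposite cells are $(3,4)$ and $(4,1)$, both unoccupied, so Condition~2 is not triggered. For Condition~3 one needs $k\in\{1,2,5\}$ and $l\in\{2,3,5\}$, with the five cells $(k,l),(k,1),(k,4),(3,l),(4,l)$ all occupied.
\begin{itemize}
\item For $k=1$ and $k=5$, one of $(k,1),(k,4)$ is free.
\item For $k=2$, one checks directly that $(3,l)$ and $(4,l)$ are never both occupied.
\end{itemize}
For the second $2$-edge I would search among pairs in $U\setminus\{(3,1),(4,4)\}$, for instance $(1,5;5,3)$ or $(2,3;5,1)$. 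I would reject any candidate whose two opposite cells are both occupied, and only then run the Condition~3 test. If needed I would allow a degenerate $2$-edge, as in Theorem~\ref{thm:zL54}.

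The main obstacle is that adding the second $2$-edge makes two more cells occupied. This can newly trigger Condition~3 for $e_a$, or Condition~2 for either $2$-edge, through the opposite cells. So the check has to be done with the final occupied set
\[
H = E_1\cup\{\text{all four halves}\},
\]
enumerating for each $2$-edge every admissible $(k,l)$ and exhibiting one unoccupied cell among the five. A useful filter is that $(k,j)$ and $(k,q)$ must both be occupied, which rules out most rows $k$ at once. If every choice of second $2$-edge with this $E_1$ fails, I would switch to another extremal $C_4$-free graph with $12$ edges (obtained by permuting rows and columns or by a different extremal configuration) and redo the search.

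Once a valid pair is found, the form
\[
P_G=\sum_{(i,j)\in E_1}x_i^2y_j^2+\sum_{(i,j;k,l)\in E_2}(x_iy_j+x_ky_l)^2
\]
has $\sos(P_G)=14$ by Theorem~\ref{thm:Def2.1_irreducible}, and hence $z_L(5,5)\ge 14$.
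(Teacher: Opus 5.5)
Your plan matches the paper's: exhibit a limited augmented $5\times5$ graph with $|E_1|=z(5,5)=12$ and $|E_2|=2$, rule out Conditions 1--3 of Definition~\ref{d2.1}, then apply Theorems~\ref{thm:Def2.1_irreducible} and~\ref{thm:BSR>=z2}. As written, however, it does not prove the statement. For an existence claim like this, the explicit, fully checked configuration \emph{is} the proof. You never fix the second $2$-edge, and you never run the check against the final occupied set $H$; you even leave open that the search might fail for your $E_1$.

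The concrete data you do give also contain errors. Since $N(3)=\{2,4\}$, the cell $(3,4)$ is a $1$-edge, so the opposite cells of $e_a=(3,1;4,4)$ are not both unoccupied. Condition~2 is avoided only because $(4,1)$ is free, which means the second $2$-edge must not use $(4,1)$. Your first suggested second edge, $(1,5;5,3)$, fails immediately, because its opposite cells $(1,3)$ and $(5,5)$ are both in $E_1$.

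The gap closes with your other suggestion, $E_2=\{(3,1;4,4),\,(2,3;5,1)\}$. The four halves are distinct and outside $E_1$, and the rows of $H$ are $\{1,2,3\},\{1,3,4,5\},\{1,2,4\},\{3,4,5\},\{1,2,5\}$.
\begin{itemize}
\item \textbf{Condition~2.} For $(3,1;4,4)$, the cell $(3,4)$ is occupied but $(4,1)$ is not. For $(2,3;5,1)$, the cell $(2,1)$ is occupied but $(5,3)$ is not.
\item \textbf{Condition~3 for $(3,1;4,4)$.} Rows $k=1,5$ are excluded because $(1,4)$ and $(5,4)$ are free. For $k=2$, the free cells $(2,2)$, $(3,3)$, $(3,5)$ rule out $l=2,3,5$ respectively.
\item \textbf{Condition~3 for $(2,3;5,1)$.} Rows $k=3,4$ are excluded because $(3,3)$ and $(4,1)$ are free. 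For $k=1$, the free cells $(2,2)$, $(1,4)$, $(1,5)$ rule out $l=2,4,5$ respectively.
\end{itemize}
With this written out, your proof is complete and runs parallel to the paper's. The paper uses rows $\{1,2,3\},\{1,4\},\{2,4,5\},\{3,5\},\{1,5\}$ for $E_1$ and $E_2=\{(1,4;5,2),(2,3;4,2)\}$, with the same row-filtered check.
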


\begin{proof}
We construct an explicit limited augmented bipartite graph $G = (S, T, E_1 \cup E_2)$
with vertex sets $S = \{1,2,3,4,5\}$, $T = \{1,2,3,4,5\}$, and edge sets
\[
\begin{aligned}
E_1 = \{ &(1,1), (1,2), (1,3), \\
         &(2,1), (2,4), \\
         &(3,2), (3,4), (3,5), \\
         &(4,3), (4,5), \\
         &(5,1), (5,5) \},
\end{aligned}
\qquad
E_2 = \{ (1,4;5,2),\; (2,3;4,2) \}.
\]

Then $|E_1| = 12$ (the classical Zarankiewicz number) and $|E_2| = 2$,
so the total number of edges is $|E_1| + |E_2| = 14$.
{It is straightforward to verify that condition (S) holds and that Conditions 1 and 2 in Definition~\ref{d2.1} are not triggered. We now proceed to check Condition 3.}
We must check for each $2$-edge that no cell $(k,l)$ with the required disjointness
makes all five listed cells occupied.

\noindent\textbf{For $e_1 = (1,4;5,2)$ (nondegenerate).}
Here $i=1$, $j=4$, $p=5$, $q=2$.
We need $k \notin \{1,5\}$ and $l \notin \{4,2\}$,
i.e., $k \in \{2,3,4\}$, $l \in \{1,3,5\}$.
The five cells are:
\[
(k,l),\ (k,4),\ (k,2),\ (1,l),\ (5,l).
\]

We examine candidates:
\begin{itemize}
  \item $l = 1$: $(1,1) \in E_1$, $(5,1) \in E_1$.
        Need $(k,1)$ occupied for some $k$.
        $(2,1) \in E_1$, but $(2,4) \in E_1$ and $(2,2)$ not occupied $\to$ fails.
        $(3,1)$ not occupied, $(4,1)$ not occupied.
        So no violation.
  \item $l = 3$: $(1,3) \in E_1$, $(5,3)$ not occupied $\to$ fails.
  \item $l = 5$: $(1,5)$ not occupied $\to$ fails.
\end{itemize}
Thus no $(k,l)$ triggers Condition 3 for $e_1$.

\noindent\textbf{For $e_2 = (2,3;4,2)$ (nondegenerate).}
Here $i=2$, $j=3$, $p=4$, $q=2$.
We need $k \notin \{2,4\}$ and $l \notin \{3,2\}$,
i.e., $k \in \{1,3,5\}$, $l \in \{1,4,5\}$.
The five cells are:
\[
(k,l),\ (k,3),\ (k,2),\ (2,l),\ (4,l).
\]

We examine candidates:
\begin{itemize}
  \item $l = 1$: $(2,1) \in E_1$, $(4,1)$ not occupied $\to$ fails.
  \item $l = 4$: $(2,4) \in E_1$, $(4,4)$ not occupied $\to$ fails.
  \item $l = 5$: $(2,5)$ not occupied $\to$ fails.
\end{itemize}
Thus no $(k,l)$ triggers Condition 3 for $e_2$.

Since all three conditions are satisfied,
$G$ contains no generalized $C_4$-cycle.
Hence the associated doubly simple biquadratic form is irreducible, and
\[
\operatorname{sos}(P_G) = |E_1| + |E_2| = 12 + 2 = 14.
\]
Therefore $z_L(5,5) \ge 14$.
\end{proof}

\noindent\textbf{Remark.}
Unlike the $5 \times 4$ case, which required a degenerate 2-edge to achieve its lower bound, the
$5 \times 5$ lower bound is attained using only nondegenerate 2-edges.

\section{Conclusion and Open Problems}

This paper introduced the augmented Zarankiewicz number \(z_A(m,n)\) and the limited augmented Zarankiewicz number \(z_L(m,n)\) as combinatorial extensions of the classical Zarankiewicz number. Each such graph \(G\) corresponds to a doubly simple biquadratic form \(P_G\), and the main theoretical result (Theorem~2.4) establishes the inequality chain
\[
\operatorname{BSR}(m, n) \ge z_A(m, n) \ge z_L(m, n) \ge z(m, n),
\]
linking the maximum biquadratic SOS rank to these new graph parameters.

Our main combinatorial results determine the \emph{exact} values of the limited augmented Zarankiewicz number for all dimensions not exceeding four:
\[
\begin{aligned}
&z_L(m,2) = m + 1 \quad (m \ge 2), \\
&z_L(3,3) = 6, \\
&z_L(4,3) = 8, \\
&z_L(4,4) = 10.
\end{aligned}
\]
For parameters involving five rows, we have established new lower bounds that improve upon the classical Zarankiewicz numbers:
\[
\begin{aligned}
&z_L(5,3) \ge 9, \\
&z_L(5,4) \ge 12, \\
&z_L(5,5) \ge 14.
\end{aligned}
\]
\begin{table}[htbp]
	\centering
	\caption{Summary of lower bounds for $\operatorname{BSR}(m,n)$ of small \(m\) and \(n\). The values printed in bold denote cases where the lower bound is known to be tight.}
	\label{tab:small_booktabs}
	\setlength{\tabcolsep}{35pt}  % ???6pt????12pt????
	\begin{tabular}{c|ccccccccc}
		\toprule
		\(m \setminus n\) & 2 & 3 & 4 & 5 \\
		\midrule
		2  & \bf 3 &\bf 4 &\bf 5 & \bf 6 \\
		3  &\bf 4  & \bf 6  &  8 & 9  \\
		4  & \bf 5 &   8 &  10  & 12  \\
		5  & \bf 6  & 9   & 12  & 14  \\
		\bottomrule
	\end{tabular}
\end{table}

{We summarize lower bounds for $\operatorname{BSR}(m,n)$ of small \(m\) and \(n\) in Table~\ref{tab:small_booktabs}.}
These findings demonstrate that the augmented Zarankiewicz framework captures combinatorial obstructions to higher SOS rank that are invisible to the classical theory, thereby providing improved lower bounds for \(\operatorname{BSR}(m,n)\).   Note that our method is more efficient than the orthogonality method as $m$ and $n$ grow.   For example, to check the irreducibility of the example in the proof of Theorem 7.1 by the orthogonality method, one needs to check orthogonality relations.  The work grows
as $m$ and $n$ grow.

The work presented here lays the foundation for this new combinatorial approach. Several natural and challenging directions remain for future research.

\begin{enumerate}
    \item \textbf{Complete the $
5 \times n$ cases.} A key next step is to prove that the lower bounds obtained in this paper for the $5\times3$, $5\times4$, and $5\times5$ cases are sharp, i.e., that \(z_L(5,3)=9\), \(z_L(5,4)=12\), and \(z_L(5,5)=14\). Verifying these exact values requires a full case analysis over all non-isomorphic extremal graphs for the classical numbers \(z(5,3)=8\), \(z(5,4)=10\), and \(z(5,5)=12\). The classification of extremal \(C_4\)-free bipartite graphs for these parameters is known in the literature \cite{RS65, Gu69}, and the number of non-isomorphic extremal graphs is finite and manageable. The increased number of distinct frames makes this a more extensive combinatorial project, which will be the subject of a forthcoming work.

    \item \textbf{Asymptotic behaviour.} The classical Zarankiewicz number satisfies the K\H{o}v\'{a}ri--S\'{o}s--Tur\'{a}n bound \(z(m,n) = O(m n^{1/2} + n)\). A fundamental open problem is to determine the asymptotic growth rate of \(z_L(m,n)\). Does the introduction of 2-edges lead to a strictly larger asymptotic order, or does the same $O(m n^{1/2})$ upper bound persist?

    \item \textbf{Gap between \(z_L(m,n)\) and \(\operatorname{BSR}(m,n)\).} Our construction shows that \(\operatorname{BSR}(m,n)\ge z_L(m,n)\). If a gap exists, several issues can be investigated.   First, is $z_A(m, n) = z_L(m, n)$?  Second, should we relax the simpilicity condition to allow two 2-edges overlap?  Third, should we consider more general SOS representations that involve squares of bilinear forms with more than two terms?
\end{enumerate}

The problem investigated here lies at the intersection of algebraic geometry \cite{BPSV19, BSSV22}, extremal graph theory \cite{Bo04, CHM24, Gu69, KST54, RS65, Za51}, and hypergraph theory. Recent developments in hypergraph Zarankiewicz-type problems \cite{HHL26, JiangMa18} suggest that higher-order analogues of our construction may exist, where squares of multilinear forms correspond to edges in $k$-uniform hypergraphs. Such connections point to a rich interplay between SOS representations and extremal combinatorics that merits further exploration.

The results for dimensions \(m, n\le 5\) obtained in this paper represent a significant first step toward understanding this deeper connection. We hope that the concepts of \(z_A(m,n)\) and \(z_L(m,n)\) will provide a productive framework for future investigations into both the combinatorial and algebraic aspects of the biquadratic SOS rank problem.

	\bigskip
	
	\noindent\textbf{Acknowledgement}
	This work was partially supported by Research Center for Intelligent Operations Research, The Hong Kong Polytechnic University (4-ZZT8), the National Natural Science Foundation of China (Nos. 12471282 and 12131004),  and Jiangsu Provincial Scientific Research Center of Applied Mathematics (Grant No. BK20233002).
	
	\medskip
	
	\noindent\textbf{Data availability}
	No datasets were generated or analysed during the current study.
	
	\medskip
	
	\noindent\textbf{Conflict of interest} The authors declare no conflict of interest.

\end{document}